\definecolor{green}{HTML}{37cc57}
\definecolor{red}{HTML}{e81e5b}
\definecolor{blue}{HTML}{1233b5}
\theoremstyle{plain}
\newtheorem{theorem}{Theorem}[section]
\newtheorem{proposition}[theorem]{Proposition}
\newtheorem{lemma}[theorem]{Lemma}
\newtheorem{corollary}[theorem]{Corollary}
\theoremstyle{definition}
\theoremstyle{remark}
\DeclareMathOperator{\Sym}{Sym}
\DeclareMathOperator{\Inn}{Inn}
\DeclareMathOperator{\Aut}{Aut}
\DeclareMathOperator{\Out}{Out}
\DeclareMathOperator{\PSL}{PSL}
\DeclareMathOperator{\Sz}{Sz}
\DeclareMathOperator{\OO}{\mathcal{O}}
\DeclareMathOperator{\GG}{\mathcal{G}}
\DeclareMathOperator{\PG}{\mathbf{P}\mathbf{G}}
\DeclareMathOperator{\ZZ}{\mathbb{Z}}
\DeclareMathOperator{\PGammaL}{P\Gamma L}
\DeclareMathOperator{\PGL}{PGL}
\DeclareMathOperator{\Gal}{Gal}
\DeclareFontFamily{OMX}{MnSymbolE}{}
\DeclareSymbolFont{MnLargeSymbols}{OMX}{MnSymbolE}{m}{n}
\DeclareFontShape{OMX}{MnSymbolE}{m}{n}{
    <-6>  MnSymbolE5
   <6-7>  MnSymbolE6
   <7-8>  MnSymbolE7
   <8-9>  MnSymbolE8
   <9-10> MnSymbolE9
  <10-12> MnSymbolE10
  <12->   MnSymbolE12
}{}
\DeclareFontShape{OMX}{MnSymbolE}{b}{n}{
    <-6>  MnSymbolE-Bold5
   <6-7>  MnSymbolE-Bold6
   <7-8>  MnSymbolE-Bold7
   <8-9>  MnSymbolE-Bold8
   <9-10> MnSymbolE-Bold9
  <10-12> MnSymbolE-Bold10
  <12->   MnSymbolE-Bold12
}{}
\let\llangle\@undefined
\let\rrangle\@undefined
\DeclareMathDelimiter{\llangle}{\mathopen}%
                     {MnLargeSymbols}{'164}{MnLargeSymbols}{'164}
\DeclareMathDelimiter{\rrangle}{\mathclose}%
                     {MnLargeSymbols}{'171}{MnLargeSymbols}{'171}
\title[Groups represented by incidence geometries]{Groups represented by incidence geometries}
\author{Dimitri Leemans}\thanks{This research was made possible thanks to an Action de Recherche Concert\'ee grant from the Communaut\'e Fran\c caise Wallonie-Bruxelles.}
\address{Dimitri Leemans, Universit\'e Libre de Bruxelles, D\'epartement de Math\'ematique, C.P.216 - Alg\`ebre et Combinatoire, Boulevard du Triomphe, 1050 Brussels, Belgium, Orcid number 0000-0002-4439-502X.}
\curraddr{}
\email{leemans.dimitri@ulb.be}
\urladdr{}
\author{Klara Stokes}
\address{Klara Stokes, Department of Mathematics and Mathematical Statistics, Ume\aa\; University,
901 87 Ume\aa, Sweden, Orcid number 0000-0002-5040-2089.}
\email{klara.stokes@umu.se}
\author{Philippe Tranchida}
\address{Philippe Tranchida, Max Planck Institute for Mathematics in the Sciences, Inselstrasse 22 D-04107 Leipzig, Orcid number 0000-0003-0744-4934.}
\curraddr{}
\email{tranchida.philippe@gmail.com}
\urladdr{}
\date{\today}
\subjclass{}{}%{}
\keywords{Incidence geometry, Representation theory}
\begin{document}

\maketitle
\begin{abstract}
The aim of this paper is to use the framework of incidence geometry to develop a theory that permits to model both the inner and outer automorphisms of a group $G$ simultaneously. More precisely, to any group $G$, we attempt to associate an incidence system $\Gamma$ whose group of type-preserving automorphisms is $\Inn(G)$, the group of inner automorphisms of $G$, and whose full group of automorphisms is the group $\Out(G)$ of outer automorphisms of $G$, getting what we call an incidence geometric representation theory for groups. 
Hence, in this setting, the group $\Inn(G)$ preserves the types of $\Gamma$ while the group $\Out(G)$ is acting non-trivially on the typeset of $\Gamma$, realizing the outer automorphisms of $G$ as correlations of the incidence system.
We give examples of incidence geometric representations for the dihedral groups, the symmetric groups, the automorphism groups of the five platonic solids, families of classical groups defined over fields such as the projective linear groups, and finally subgroups of free groups $F_n$ whose outer automorphism group is the largest finite subgroup of $\Aut(F_n)$.
\end{abstract}

\section{Introduction}

A central challenge in group theory is to understand the structure of a given group. This is not only important from the viewpoint of group theory but also fundamental in various applications in algebra, geometry, and combinatorics. An approach that has shown to be very successful is to let the group act upon a geometric or combinatorial structure. After carefully specifying certain desired properties of the group action, the structure upon which the group acts can be used to study the group itself. This is the idea motivating the development of the theory of buildings and diagram geometry, developed by Jacques Tits and others~\cite{BC2013,buek,buekenhout2013diagram,Tits1982,Tits1974}.

In this work, we extend this principle by constructing incidence systems that do not only serve as representations of groups but also of their outer automorphism groups. Specifically, for a given group $G$, we build incidence systems such that the group of type-preserving automorphisms is isomorphic to the inner automorphism group of $G$ and the correlation group aligns with the group of automorphisms of $G$. We hope that our construction will reveal deep connections between the algebraic properties of groups and the combinatorial structures of geometries, providing new tools for exploration and analysis. Unlike traditional representations of groups, our approach constructs incidence systems that encapsulate both the inner and outer automorphism groups of $G$, providing a unified framework for analyzing these algebraic properties geometrically. 

Classical examples of geometries with correlation groups related to the automorphism group of the symmetry group are the projective geometries (in the language of buildings the $A_n$-geometries) in which the dualities act as correlations of the geometry. Less known but more intriguing is the incidence building defined from the points, the lines and the two classes of $3$-spaces (“Greek” and “Latin”) contained in the $6$-dimensional projective hyperbolic quadric known as the Study quadric or the $D_4$-geometry \cite{Cartan,Study1,Study2,tits1959trialite}. Study investigated this geometry in the late 19th century motivated by the fact that its points parametrize the rigid motions in a three-dimensional Euclidean space. The Study quadric features a triality, a correlation of order $3$ that permutes the points with the Greek and the Latin $3$-spaces. Dualities and trialites are fundamental in geometry; by swapping the role of the corresponding geometric objects a duality makes $|S_2|=2$ theorems out of every projective theorem. If we then also have a triality, we get $|S_3|=3!=6$ theorems out of every theorems.

In a series of recent papers, the authors have shown the existence of incidence geometries with trialities representing the outer automorphism group of the type-preserving automorphism group of the geometry \cite{LeemansStokes2019,leemans2022incidence,leemans2024, LST2025}. Building on these preliminary works, we now introduce the notion of incidence geometric representations, which further generalizes these ideas by focusing on incidence systems where the outer automorphism group acts as correlations and the inner automorphism group acts as type-preserving automorphism group.

More precisely, let $\Gamma$ be an incidence system, $\Aut(\Gamma)$ be the group of all incidence preserving bijections of $\Gamma$ and $\Aut_I(\Gamma)$ be the normal subgroup of $\Aut(\Gamma)$ consisting of type preserving bijections. 
An \textit{incidence geometric representation for a pair $(H,K)$ of groups}, with $H < K$, is an incidence system $\Gamma$ together with a pair of isomorphisms $\varphi_1 \colon H \to \Aut_I(\Gamma)$ and $\varphi_2 \colon K \to \Aut(\Gamma)$ such that the following diagram commutes

\begin{center}
\begin{tikzcd}
K \arrow[r,"\varphi_2"]                              & \Aut(\Gamma)                   \\
H \arrow[u, hook] \arrow[r, "\varphi_1"] & \Aut_I(\Gamma) \arrow[u, hook]
\end{tikzcd}
\end{center}
where the vertical arrows are the natural injections. Notice that $\Aut_I(\Gamma)$ is always a normal subgroup of $\Aut(\Gamma)$. Hence, for an incidence geometric representation to exist for a pair $(H,K)$, we must also have that $H$ is a normal subgroup of $K$.

An \textit{incidence geometric representation for a group $G$} is then an incidence geometric representation for the pair $(\Inn(G), \Aut(G))$.
If the maps $\varphi_1$ and $\varphi_2$ are not isomorphisms but are only injective, we will say that the incidence geometric representation is \textit{weak}.
By abuse of notation, when the isomorphisms $
\varphi_1$ and $\varphi_2$ are clear from context, we will say that incidence system $\Gamma$ is the (weak) incidence geometric representation for $(H,K)$ or for $G$.

With the purpose of illustrating the versatility of the model we propose, we provide a list of examples of incidence geometric representations for both finite and infinite groups: the symmetric groups, the dihedral groups, the automorphism groups of the $5$ platonic solids, some classical groups and certain subgroups of the free group $F_n$.  Outer space is a space upon which the outer automorphism group of $F_n$ acts \cite{Vogtmann2002}, and we hope that our efforts to bring tools from incidence geometry into the study of the outer automorphism group of $F_n$ will shed more light upon the structure of Outer space.

%\subsection{Correlation groups of buildings of classical groups}

%Looking for literature on this... 

\section{Preliminaries}

%\subsection{Incidence geometries}
In this section, we define the various objects and properties that will be used in the rest of the paper.

An {\em incidence system} is a quadruple $\Gamma=(X,*,t,I)$ where $X$ is a set of {\em elements}, $I$ is a finite set of {\em types}, $t$ is a surjective {\em type function} $t:X\twoheadrightarrow I$ and $*$ is an {\em incidence relation}, that is, a binary relation on the set $X$, such that elements of the same type are not related.
The cardinality $|I|$ is called the {\em rank} of $\Gamma$. 
The relation $*$ is called the {\em incidence relation} of $\Gamma$. 
As any symmetric binary relation, it can be represented using a symmetric matrix or an undirected graph. The graph representing the incidence relation is called the {\em incidence graph} of the incidence system. It is an $n$-partite graph (with $n := |I|$), with the elements of each type in each partition. 
 
A {\em flag} of an incidence system is a set of pairwise incident elements. The {\em rank} of a flag is the number of types in the flag. Since all elements in a flag have different type, this coincides with the number of elements in the flag. The type of a flag $F$ is the set $t(F)$.
A {\em chamber} is a flag of type $I$.
An incidence system  is an {\em (incidence) geometry} if every flag is contained in a chamber. 
A geometry is {\em firm} if every non-maximal flag is contained in at least two chambers.

The {\em residue} of a flag $F$ in a geometry $\Gamma$ is the set of elements in $\Gamma$ that are not in $F$ but that are incident with all elements of $F$. The residue of a flag in a geometry is a geometry. 
%The rank of a residue of a flag is the number of types in the residue.  
If $\Gamma$ is a geometry, then every flag is contained in a chamber, therefore the residue of a flag of rank $k$ in a geometry of rank $n$ is $n-k$. 
We say that an incidence geometry $\Gamma$ is {\em residually connected} if the incidence graphs of all its residues of rank at least two are connected graphs.
 
If $J$ is a flag of the geometry $\Gamma=(X,*,t,I)$, then the {\em $J$-truncation} of $\Gamma$, denoted by $^J\Gamma$, 
is the geometry $(X',*',t',J)$, whose set of elements $X'$ consists of the elements of $\Gamma$ with type in $J$, and the type function $t'$ and the incidence relation $*'$ are the restriction of $t$ and $*$ to $X'$ respectively. 

A {\em correlation} of an incidence geometry $\Gamma(X,*,t,I)$ is a permutation $\alpha$ of $X$ such that for all $x,y\in X$, $t(x) = t(y) \iff t(\alpha(x)) = t(\alpha(y))$ and $x*y \iff \alpha(x) * \alpha(y)$.
The set of all correlations of a geometry $\Gamma$, together with composition, forms a group called the {\em correlation group} of $\Gamma$ and denoted by ${\rm Aut}(\Gamma)$.
It contains a subgroup consisting of all the elements that induce the identity on the set of types (that is, correlations $\alpha$ such that $t(x) = t(\alpha(x))$ for all $x\in X$). This subgroup is called the group of {\em automorphisms} of $\Gamma$ and it is a normal subgroup of ${\rm Aut}(\Gamma)$ that we denote by ${\rm Aut}_I(\Gamma)$. 

Given an incidence geometry $\Gamma$ and a subgroup $G\leq Aut_I(\Gamma)$, we say that $G$ is {\em flag-transitive} on $\Gamma$ if the action of $G$ is transitive on the set of flags of type $J$ for every $J\subseteq I$. If there exists $G\leq \Aut_I(\Gamma)$ such that $G$ is flag-transitive on $\Gamma$ we say that $\Gamma$ is {\em flag-transitive}.

%Maps, Petrie dual?

\begin{comment}
\section{Cyclic groups}

For $\ZZ_p$ we can take a $p-1$ gon and add decorations to the edges to kill the axial symmetries.
\end{comment}

\section{Dihedral groups}
The first example that we will investigate is the one of dihedral groups, denoted by $D_{2n}$ for some integer $n$. The group $D_{2n}$ is the automorphism group of an $n$-sided polygon $P_n$ and has cardinality $2n$. Let $\phi(n)$ be the Euler Totient function and $\tau(n)$ be the set of totatives of $n$ (i.e: the set of integers between $1$ and $n-1$ that are coprime to $n$). We have that $\Aut(D_{2n}) \cong \text{Hol}(n) \cong C_n \rtimes C_{\phi(n)}$ (see~\cite[Page 224]{Miller1908}). The center $Z(D_{2n})$ of $D_{2n}$ is trivial if $n$ is odd and is isomorphic to $C_2$ if $n$ is even.
If an integer $k$ is in $\tau(n)$, then its inverse $-k \pmod n$ in $\ZZ_n$ is also in $\tau(n)$. Let $\Bar{\tau}(n)$ be the set that contains the smallest of $k$ and $-k \pmod n$ for each $k \in \tau(n)$. Note that $|\Bar{\tau}(n)| = \frac{\phi(n)}{2}$.

Before defining an incidence system $\Gamma(D_{2n})$ for $D_{2n}$, we need to introduce some terminology.
The {\em $n$-sided polygon} $P_n$ can be seen as a graph with $n$ vertices and $n$ edges which form a cycle. Let us denote the set of vertices of $P_n$ by $V$ and the set of edges of $P_n$ by $E_1$. For any vertex $v 
\in V$ and for any integer $1 \leq k \leq n-1$, we can play the following game. Start with $v$ and then connect it to a vertex $w$ that is at distance $k$ from $v$ in $P_n$ (there are exactly two choices for $w$). Then repeat the process starting from $w$. There are two possible outcomes: either you visit all vertices of $P_n$ by this process and thus produce a Hamiltonian cycle, or you only visit a fraction of all the vertices. The first result occurs when $k$ is co-prime with $n$ so that $k \in \tau(n)$. In this case, the Hamiltonian cycle forms a regular inscribed polygon with edges of length $k$.  The second case happens when $k$ and $n$ are not co-prime, in which case you will visit exactly $n/(n,k)$ vertices of $P_n$. For any $k \in \tau(n)$, let us denote by $E_k$ the set of edges (unordered pairs of vertices) appearing in the Hamiltonian path produces by the process described above. Note that $E_k$ does not depend on the choice of starting vertex $v$ and that $E_k = E_{(-k \pmod n)}$. 

We divide the construction of $\Gamma(D_{2n})$ into two cases, according to whether $n$ is odd or even. See Figures \ref{D10} and \ref{D16} for an example of each case.

\subsection{$n$ is odd} 
We define an incidence system $\Gamma = \Gamma(D_{2n})=(X = V \sqcup (\sqcup_{i \in \Bar{\tau}(n)}E_i),t,*)$ over $I = \{0\} \cup \Bar{\tau}(n)$ as follows:
\begin{itemize}
    \item $V$ is the set of vertices of $P_n$ and, for $i\in \Bar{\tau}(n)$, $E_i$ is the set of edges of an inscribed regular polygon with edges of length $k$;
    \item $t(X_i) = i$;
    \item Let $v \in X_0$ and $e \in X_i$ with $i \in \Bar{\tau}(n)$, then $e = \{v_1,v_2\}$ with $v_1,v_2 \in V$ and we set $v * e$ if and only if $v = v_1$ or $v = v_2$;
    \item For $i,j \in I \setminus \{0\}$, we set that every elements $e_i\in X_i$ and $e_j \in X_j$ are incident as long as $i \neq j$.
\end{itemize}

%We define a geometry $\Gamma = \Gamma(D_{2n})=(X = X_0 \cup (\cup_{i \in \Bar{\tau}(n)}X_i),t,*)$ over $I = \{0\} \cup \Bar{\tau}(n)$ as follows:
%\begin{itemize}
  %  \item $X_0 = \ZZ_n$, $X_i = \{\{x,y\}  \mid x,y \in \ZZ_n, |x-y|=i \}$ for $i \in \Bar{\tau}(n)$,
   % \item $t(X_i) = i$,
  %  \item Let $\gamma \in X_0$ and $\xi \in X_i$ with $i \in \Bar{\tau}(n)$, then $\xi = \{x,y\}$ with $x,y \in \ZZ_n$ $\gamma * \{x,y\}$ if and only if $\gamma \in \{x,y\}$,
  %  \item Every elements $\gamma \in X_i$ and $\xi \in X_j$ are incident as long as $i \neq j$ and that both $i$ and $j$ are different from $0$.
%\end{itemize}

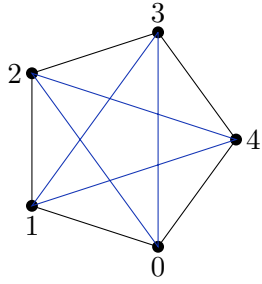
\begin{figure}
\begin{center}
\begin{tikzpicture}[scale = 1.5]
\label{D10}

\foreach \i in {1,...,5}
    \fill (\i*360/5:1) coordinate (5\i) circle(1.5 pt);
\draw (51) -- (52) -- (53) -- (54) -- (55) -- (51) ;
\draw [blue] (51) -- (53) -- (55) -- (52)-- (54) -- (51);
\filldraw[black] (54) circle (0pt)  node[anchor=north]{$0$};
\filldraw[black] (53) circle (0pt)  node[anchor=north]{$1$};
\filldraw[black] (52) circle (0pt)  node[anchor=east]{$2$};
\filldraw[black] (51) circle (0pt)  node[anchor=south]{$3$};
\filldraw[black] (55) circle (0pt)  node[anchor=west]{$4$};

\end{tikzpicture}
\caption{The geometry $\Gamma(D_{10})$. Since $n =5$ is a prime number, the underlying graph for $\Gamma(D_{10})$ is the complete graph.}   
\end{center}
\end{figure}

When $n$ is even, the vertex set $V$ of $P_n$ can be partitioned into two sets $V_0$ and $V_{-1}$ such no two vertices in $V_0$ are incident and no two vertices in $V_{-1}$ are incident.

\subsection{$n$ is even}
We define an incidence system $\Gamma = \Gamma(D_{2n})=(X = V_{-1} \sqcup V_0 \sqcup (\sqcup_{i \in \Bar{\tau}(n)}E_i),t,*)$ over $I = \{-1,0\} \cup \Bar{\tau}(n)$ as follows:
\begin{itemize}
    \item $V_{-1}$ and $V_0$ are a bi-partition of the vertices of $V$ and $E_i$ is the set of edges of an inscribed regular polygon with edges of length $k$,
    \item $t(X_i) = i$,
    \item Let $v \in X_{-1} \cup X_0$ and $e \in X_i$ with $i \in \Bar{\tau}(n)$, then $e = \{v_1,v_2\}$ with $v_1,v_2 \in V$ and we set $v * e$ if and only if $v = v_1$ or $v = v_2$,
    \item For $i,j \in I \setminus \{-1,0\}$, we set that every elements $e_i\in X_i$ and $e_j \in X_j$ are incident as long as $i \neq j$.
    \item Every vertex of $V_0$ is incident to every vertex of $V_{-1}$.
\end{itemize}

%We define a geometry $\Gamma = \Gamma(D_{2n})=(X = V_{-1} \sqcup V_0 \sqcup (\sqcup_{i \in \Bar{\tau}(n)}E_i),t,*)$ over $I = \{-1,0\} \cup \Bar{\tau}(n)$ as follows:
%\begin{itemize}
    %\item $V_{-1} = 2 \cdot \ZZ_n +1$, $X_0 = 2 \cdot \ZZ_n$ and $X_i = \{\{x,y\}  \mid x,y \in \ZZ_n, |x-y|=i \}$ for $i \in \Bar{\tau}(n)$,
   % \item $t(X_i) = i$,
   % \item Let $\gamma \in X_{-1} \cup X_0$ and $\xi \in X_i$ with $i \in \Bar{\tau}(n)$, then $\xi = \{x,y\}$ with $x,y \in \ZZ_n$ $\gamma * \{x,y\}$ if and only if $\gamma \in \{x,y\}$,
    %\item Every elements $\gamma \in X_i$ and $\xi \in X_j$ are incident as long as $i \neq j$ and that both $i$ and $j$ are in $\Bar{\tau}(n)$.
%\end{itemize}

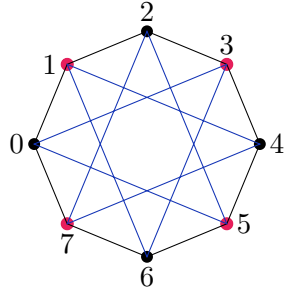
\begin{figure}
\begin{center}
\begin{tikzpicture}[scale = 1.5]
\label{D16}

\foreach \i in {1,...,8}
    \fill (\i*360/8:1) coordinate (8\i) circle(1.5 pt);
\foreach \i in {1,3,5,7}
    \filldraw [red] (\i*360/8:1)   circle(1.5 pt);
\draw (81) -- (82) -- (83) -- (84) -- (85) -- (86) -- (87)--(88)--(81);
\draw [blue] (81) -- (84) -- (87) -- (82)-- (85) -- (88)--(83)--(86)--(81);

\filldraw[black] (84) circle (0pt)  node[anchor=east]{$0$};
\filldraw[black] (83) circle (0pt)  node[anchor=east]{$1$};
\filldraw[black] (82) circle (0pt)  node[anchor=south]{$2$};
\filldraw[black] (81) circle (0pt)  node[anchor=south]{$3$};
\filldraw[black] (88) circle (0pt)  node[anchor=west]{$4$};
\filldraw[black] (87) circle (0pt)  node[anchor=west]{$5$};
\filldraw[black] (86) circle (0pt)  node[anchor=north]{$6$};
\filldraw[black] (85) circle (0pt)  node[anchor=north]{$7$};

\end{tikzpicture}
\caption{The geometry $\Gamma(D_{16})$. Since $n = 8$ is even, we separate the vertices two sets, corresponding to even and odd labels.}   
\end{center}
\end{figure}

\begin{theorem}\label{d2n}
The incidence system $\Gamma(D_{2n})$ is an incidence geometric representation for $D_{2n}$ for $n>3$ and it is a weak incidence geometric representation for $D_{2n}$ for $n = 3$.
\end{theorem}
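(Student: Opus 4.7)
The plan is to identify the vertex set $V$ with $\ZZ/n$ and to define $\varphi_2 \colon \Aut(D_{2n}) \cong \mathrm{Hol}(n) \to \Aut(\Gamma)$ by $\varphi_2(a,b) = (v \mapsto av + b)$, where $\mathrm{Hol}(n) = \ZZ/n \rtimes (\ZZ/n)^{\ast}$. Since $\varphi_2(a,b)$ sends $\{v, v+i\} \in E_i$ to $\{av+b,\,av+b+ai\} \in E_{\overline{ai}}$, where $\overline{ai}$ denotes the representative of $\pm ai$ in $\Bar{\tau}(n)$, each $\varphi_2(a,b)$ is a correlation of $\Gamma$; the usual semidirect-product law matches composition of affine maps, so $\varphi_2$ is a homomorphism, and $\sigma(v) = av + b$ is clearly injective in $(a,b)$. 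Since $|\mathrm{Hol}(n)| = n\phi(n) = |\Aut(D_{2n})|$, the content of the theorem reduces to showing, for $n > 3$, that $\varphi_2$ is surjective and that it restricts to an isomorphism $\Inn(D_{2n}) \to \Aut_I(\Gamma)$.

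To compute $\Aut_I(\Gamma)$, observe that any type-preserving automorphism preserves each $E_i$, in particular the cycle $P_n = E_1$. A bijection $\sigma$ of $\ZZ/n$ satisfying $\sigma(v+1) - \sigma(v) \in \{\pm 1\}$ for all $v$ traces a Hamiltonian cycle in $P_n$ starting at $\sigma(0)$, and since reversing direction would revisit a vertex, all the signs must coincide; hence $\sigma(v) = \varepsilon v + c$ with $\varepsilon \in \{\pm 1\}$, and conversely every such map preserves each $E_i$. In the even case the parity bipartition $V_0 \sqcup V_{-1}$ must additionally be preserved, and since $\varepsilon$ is odd this forces $c$ to be even. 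A count gives $|\Aut_I(\Gamma)| = 2n$ when $n$ is odd and $n$ when $n$ is even, matching $|\Inn(D_{2n})|$; a comparison with the generators of $\Inn(D_{2n})$ viewed inside $\mathrm{Hol}(n)$ then shows that $\varphi_2$ restricts to the required isomorphism.

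The main obstacle is to show $\varphi_2$ is surjective onto $\Aut(\Gamma)$ when $n > 3$. For this I would run a degree argument on the incidence graph of $\Gamma$: vertices have incidence degree $\phi(n)$ when $n$ is odd and $\phi(n) + n/2$ when $n$ is even, whereas every edge has incidence degree $2 + n(\phi(n)/2 - 1)$; a short algebraic check (which, in the even case, reduces to $(n-2)(\phi(n)-3) \neq 2$) shows these two values are distinct for every $n > 3$. Hence any correlation $\alpha$ preserves the partition of types into vertex-types and edge-types, and its restriction $\sigma$ to $V$ is a bijection sending $E_1$ to some $E_j$; composing with $v \mapsto j^{-1} v$ yields a graph automorphism of $P_n$ by the Hamiltonian argument above, so $\sigma(v) = av + b$ for some $(a,b) \in \mathrm{Hol}(n)$. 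In the even case one checks separately that whether $\alpha$ fixes or swaps the two vertex types is determined by the parity of $b$; either way $\alpha = \varphi_2(a,b)$.

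For $n = 3$ the degree argument fails because $\phi(3) = 2$ forces both incidence degrees to equal $2$; indeed $\Gamma(D_6)$ is the incidence graph of the triangle, which is the $6$-cycle $C_6$, whose full correlation group $D_{12}$ strictly contains the image $S_3 = \Aut(D_6)$ of $\varphi_2$ because of the correlations that swap the two parts of the bipartition of $C_6$. Hence $\varphi_2$ is injective but not surjective, yielding the weak incidence geometric representation statement for $n = 3$.
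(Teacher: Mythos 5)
Your proposal is correct, and its overall scaffold matches the paper's: exhibit the holomorph $\ZZ_n\rtimes(\ZZ/n)^{*}$ acting by affine maps $v\mapsto av+b$ as correlations (the paper's generators $\alpha_i,\mu_i$ are exactly these), identify the type-preserving part with $\Inn(D_{2n})$, and then show there are no further correlations. Where you genuinely diverge is in that last surjectivity step. The paper runs an orbit--stabilizer count: $n$ choices for the image of a vertex, $\phi(n)$ choices for the image of an incident edge, trivial stabilizer of a vertex--edge flag, giving $|\Aut(\Gamma)|=n\phi(n)$; this tacitly assumes that a correlation cannot send a vertex to an element of some $E_i$, a point the paper does not justify. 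Your degree computation in the incidence graph ($\phi(n)$, resp.\ $\phi(n)+n/2$, for vertices versus $2+n(\phi(n)/2-1)$ for edges, distinct exactly when $n>3$) closes that gap explicitly, and it also explains cleanly why $n=3$ is the exceptional case (both degrees equal $2$, and $\Gamma(D_6)$ is the hexagonal incidence graph of the triangle with correlation group $D_{12}$), which the paper handles by a separate ad hoc remark. Your subsequent reduction --- restrict to $V$, note $E_1\mapsto E_j$, compose with $\mu_{j^{-1}}$, and invoke the rigidity of cycle automorphisms to get $\sigma(v)=\varepsilon v+c$ --- replaces the paper's flag-stabilizer argument and is equally valid. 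The only points left implicit (that a correlation is determined by its restriction to $V$, since each edge is the unique element of its type incident to precisely its two endpoints, and the explicit identification of $\Inn(D_{2n})$ with $\{(\pm1,b):b\in 2\ZZ_n\}$ inside the holomorph) are one-line checks and are glossed over at the same level in the paper itself.
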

\begin{proof}
    For $n= 3$, $\Gamma$ is the geometry of a triangle. In this case, we have that $D_6 \cong \Out(D_6)$ but $\Gamma$ has a duality. Therefore, we have that $\Aut_I(\Gamma) \cong D_6$ but $\Aut(\Gamma)$ is twice as large as $\Aut_I(\Gamma)$.

    Suppose now that $n >3$.
    We first explicitly define some automorphisms and correlations of $\Gamma = \Gamma(D_{2n})$. Label the vertices in $V$ by $0,1,\cdots, n-1$ in a cyclic fashion, so that we can identify $V$ with $\ZZ_n$. Then, an element $e \in E_i$ will be an unordered pair $\{x,y\}$ with $|x - y| = i \pmod n$. For any $x \in V$ and any $i \in \tau(n)$, define $\alpha_i(x) = x +i \pmod n$ and $\mu_i(x) = x \cdot i \pmod n$. Each of these functions $\alpha_i$ and $\mu_i$ induces a correlation of $\Gamma$. The functions $\mu_{1}$ is the identity and $\mu_{-1}$ is also type preserving as it corresponds to an axial symmetry fixing the vertex labeled $0$. The functions $m_{i}$ for $i \neq -1,1$ are never type preserving, as they send the set $E_j$ to $E_{ij}$.
    The $\alpha_i$ correspond to rotations of $P_n$. Hence, when $n$ is odd, all of the $\alpha_i$ are type preserving while when $n$ is even, $a_i$ is type preserving if and only if $i$ is even. Indeed, when $i$ is odd, $\alpha_i$ will exchange $V_{-1}$ and $V_0$.
    
    The subgroup $G$ of correlations of $\Gamma(D_{2n})$ generated by the functions $\alpha_i$ and $\mu_{i}$ is easily checked to be isomorphic to $\ZZ_n \rtimes \ZZ_{\phi(n)}$, where the $\ZZ_n$ component is the the group contained all the $\alpha_i$ and the $\ZZ_{\phi(n)}$ is the group containing all the $\mu_i$'s. The type preserving subgroup $H$ of $G$ is then a dihedral group of order $2n$ when $n$ is odd, generated by the $\alpha_i$ and $\mu_{-1}$, and a dihedral group of order $n$ when $n$ is even, generated by the $\alpha_i$ with $i$ even and $\mu_{-1}$. 

    Putting all the above together, we have so far shown that $\Gamma(D_{2n})$ is a weak incidence geometric representation for $D_{2n}$. To show that it is, in fact, an incidence geometric representation, we now need to show that the group $G$ defined above is $\Aut(\Gamma(D_{2n}))$. We will achieve this by proving that $|\Aut(\Gamma(D_{2n}))| = n \phi(n)$. There are $n$ choices for the image or a vertex $v$, as $\Aut(\Gamma(D_{2n}))$ is transitive on the vertices. Suppose thus that $v$ is fixed. There are $2 |I| = \phi(n)$ edges incident to $v$ and the stabilizer of $v$ in $\Aut(\Gamma(D_{2n}))$ acts transitively on these. Suppose now that $v$ and an edge $e =\{v,w\} \in E_i$ are fixed. All the other vertices of $P_n$ must then also be fixed. That is easily checked by looking at incidence inside of the regular inscribed polygon corresponding to $E_i$. Hence, the stabilizer of a pair $\{v,e\}$ is the identity and $|\Aut(\Gamma(D_{2n}))|$ is equal to $|V|$ multiplied by the number of edges incident to a vertex $v$ in $\Gamma(D_{2n})$, which is equal to $\phi(n)$ as required.
\end{proof}
\section{Symmetric and alternating groups}
The following theorem gives an incidence geometric representation for almost all groups $S_n$.
\begin{theorem}\label{sym}
    If $n \neq 3,6$, the complete graph $K_n$ on $n$ vertices is an incidence geometric representation for $\Sym(n)$.
\end{theorem}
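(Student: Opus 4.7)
The plan is to invoke the standard description of $\Aut(\Sym(n))$ and then exploit the rigidity of $K_n$ as a rank-$2$ incidence system. Recall that for $n \neq 2, 6$ one has $\Inn(\Sym(n)) = \Aut(\Sym(n)) = \Sym(n)$, and in particular $\Out(\Sym(n)) = 1$. Hence to exhibit an incidence geometric representation it suffices to produce a single isomorphism $\varphi \colon \Sym(n) \to \Aut(K_n)$ satisfying $\Aut(K_n) = \Aut_I(K_n)$; the commutative square in the definition of incidence geometric representation then collapses to the identity on $\Sym(n)$.

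The first step is to identify $\Aut_I(K_n)$ with $\Sym(n)$. Viewing $K_n$ as the rank-$2$ incidence system with types \emph{vertex} and \emph{edge} and the obvious incidence, every permutation of the vertex set extends uniquely to a type-preserving correlation (because any two distinct vertices are joined by an edge), and conversely every type-preserving correlation is determined by its action on vertices. This yields $\Aut_I(K_n) \cong \Sym(n)$ almost immediately. The main step is then to rule out type-swapping correlations. Because the type set has size two, a correlation either preserves or swaps the two types, and a type-swapping correlation would have to induce a bijection between the $n$ vertices and the $\binom{n}{2}$ edges of $K_n$. The equation $n = \binom{n}{2}$ has $n = 3$ as its only integer solution with $n \geq 2$, so for every $n \geq 4$ the sets $V$ and $E$ have different cardinalities and no such correlation can exist. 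Combining the two steps, $\Aut(K_n) = \Aut_I(K_n) \cong \Sym(n)$ whenever $n \geq 4$, and setting $\varphi_1 = \varphi_2$ equal to this isomorphism gives the required representation.

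No serious obstacle is anticipated; the only content is the cardinality inequality $n \neq \binom{n}{2}$ for $n \geq 4$. It is nevertheless worth recording why the excluded cases behave differently, as this pinpoints the precise role of the two hypotheses. For $n = 3$, $K_3$ is self-dual, which doubles the size of the correlation group and is precisely the phenomenon observed earlier in the weak representation of $D_6$ from Theorem~\ref{d2n}. For $n = 6$, the cardinalities $|V| = 6$ and $|E| = 15$ still differ, so the cardinality argument still gives $\Aut(K_6) = \Sym(6)$; but now $\Aut(\Sym(6)) = \Sym(6) \rtimes \ZZ_2$ is strictly larger than $\Sym(6)$, so the exotic outer automorphism of $\Sym(6)$ is not realized by any correlation of $K_6$, and $K_6$ is only a weak incidence geometric representation.
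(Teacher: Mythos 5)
Your proposal is correct and follows essentially the same route as the paper's (much terser) proof: identify $\Aut_I(K_n)\cong\Sym(n)$, rule out the point--edge duality except when $n=\binom{n}{2}$, i.e.\ $n=3$, and invoke $\Aut(\Sym(n))=\Sym(n)$ for $n\neq 6$. Your explicit cardinality argument and the closing remarks on the excluded cases $n=3,6$ simply spell out what the paper leaves implicit (and match its subsequent discussion of $GQ(2,2)$ as a weak representation of $\Sym(6)$).
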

\begin{proof}
    When $n \neq 6$, we have that $\Aut(\Sym(n)) = \Sym(n)$. Hence, we look for geometries that have $\Sym(n)$ has type preserving group and have no correlations. The graph $K_n$ is such a geometry, except when $n =3$, since, in that case, there is a duality that exchanges points and edges.
\end{proof}

For $n=3$, Theorem~\ref{d2n} gives a weak incidence geometric representation of $S_3\cong D_6$, 
and for $n=6$, the generalized quadrangle $GQ(2,2)$ is a weak incidence geometric representation of $S_6$. The generalized quadrangle $GQ(2,2)$ is a geometry of rank two whose group of type-preserving automorphisms is $S_6$ and whose group of correlations is $\Aut(S_6)\cong \PGammaL(2,9)$. It can be constructed by taking as points the 15 transpositions of $S_6$ acting on 6 points, and as lines the 15 3-transpositions of $S_6$ and defining incidence as inclusion, that is, a transposition is incident to a 3-transposition if the transposition is one of the three transpositions of the 3-transposition.

%For $n=6$, the generalized quadrangle $GQ(2,2)$ constructed by taking as points the 15 transpositions of $S_6$ acting on 6 points, and as lines the 15 3-transpositions of $S_6$ and defining incidence as inclusion, that is a transposition is incident to a 3-transposition if the transposition is one of the three transpositions of the 3-transposition, we get a geometry of rank two whose group of type-preserving automorphisms is $S_6$ and whose group of correlations is $\Aut(S_6)\cong \PGammaL(2,9)$.

For the alternating groups, we refer to~\cite{atlasRWPRI} where examples of incidence geometric representations for $A_5$, $A_6$ and $A_7$ can be found.
However, at present we do not have examples for the alternating groups in general. 

\section{Automorphism groups of polyhedra}
In this section, we find incidence geometric representations for the automorphism groups of the five platonic solids. There are only three groups to consider, as a polyhedron has the same automorphism group as its dual. These groups are $\Sym(4)$, that is the group of the tetrahedron, $\Sym(4) \times C_2$, that is the group of the cube and the octahedron, and $A_5 \times C_2$, that is the group of the icosahedron and the dodecahedron.
The case of $\Sym(4)$ is already handled by Theorem~\ref{sym}.

\subsection{The cube}
Let $G = \Sym(4) \times C_2$. Then $\Aut(G) = G$ and $\Inn(G) = \Sym(4)$.

Let $\GG$ be a graph obtained as the $\{0,1\}$-truncation of the geometry of the cube and let $P= \{P_1,P_2\}$ be a proper 2-coloring of the vertices of $\GG$. We define a geometry with $4$ types as follows: the elements of type $1$ are the vertices in $P_1$, the elements of type $2$ are the vertices in $P_2$ and the elements of type $3$ and $4$ are the edges and faces of the cube. Incidence is induced from the cube.

\begin{theorem}
    $\Gamma(\Sym(4) \times C_2)$ is an incidence geometric representation for $\Sym(4) \times C_2$.
\end{theorem}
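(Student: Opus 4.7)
The plan is to compute both sides of the desired representation and then identify them. First, for $G = \Sym(4) \times C_2$, I would observe that $Z(G) = 1 \times C_2$, so $\Inn(G) \cong G/Z(G) \cong \Sym(4)$. For $\Aut(G)$, any automorphism fixes the unique central involution and induces an automorphism of $G/Z(G) \cong \Sym(4)$; since $\Aut(\Sym(4)) = \Sym(4)$ and the kernel of the natural map $\Aut(G) \to \Aut(\Sym(4))$ consists of maps $g \mapsto g \cdot \xi(g)$ for homomorphisms $\xi \colon G \to Z(G)$ trivial on $Z(G)$ (giving only the identity and a single ``sign-twist'' outer automorphism), one obtains $|\Aut(G)| = 48$. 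A direct check shows this outer involution commutes with every inner automorphism, hence $\Aut(G) \cong \Sym(4) \times C_2 \cong G$, with $\Inn(G)$ sitting as the distinguished $\Sym(4)$-factor.

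Next I would realize $\Aut(\Gamma)$ geometrically. The incidence system $\Gamma$ is simply the rank-$3$ incidence geometry of the cube (vertices, edges, faces with inclusion) with the vertices further split into two types via the bipartition $\{P_1, P_2\}$. The full Euclidean symmetry group $W(B_3)$ of the cube is isomorphic to $\Sym(4) \times C_2$, where one realizes the $\Sym(4)$-factor as the symmetry group of the inscribed tetrahedron with vertex set $P_1$ and the $C_2$-factor as generated by the central inversion $-I$. The group $W(B_3)$ acts on $\Gamma$ as correlations: the central inversion swaps $P_1$ with $P_2$ and therefore induces the nontrivial permutation of types $1$ and $2$, while the tetrahedral subgroup stabilizes each $P_i$ and hence acts by type-preserving automorphisms. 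This yields embeddings $\Sym(4) \hookrightarrow \Aut_I(\Gamma)$ and $\Sym(4) \times C_2 \hookrightarrow \Aut(\Gamma)$.

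For the matching upper bound, I would note that any correlation must respect the local combinatorial structure of each type: elements of type $3$ (edges) are incident with exactly two vertices, whereas elements of type $4$ (faces) are incident with four, so no correlation can interchange these two types, and only the types $1$ and $2$ can be swapped. Hence the induced action of $\Aut(\Gamma)$ on the typeset $I$ has order at most $2$, and since the cube's incidence lattice recovers its full geometric symmetry group, any type-preserving correlation of $\Gamma$ is already an element of $W(B_3)$ fixing each $P_i$; this stabilizer is precisely the tetrahedral $\Sym(4)$. Therefore $|\Aut(\Gamma)| \leq 2 \cdot 24 = 48$, matching the lower bound from $W(B_3)$, so $\Aut_I(\Gamma) \cong \Sym(4) \cong \Inn(G)$ and $\Aut(\Gamma) \cong \Sym(4) \times C_2 \cong \Aut(G)$. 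The isomorphisms $\varphi_1$ and $\varphi_2$ can then be chosen to match the two direct-product decompositions, yielding the required commuting diagram. The main technical obstacle is the step identifying type-preserving correlations of $\Gamma$ with cube symmetries stabilizing the bipartition, which rests on the classical fact that the face lattice of a polytope recovers its geometric symmetry group.
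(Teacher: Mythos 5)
Your proposal is correct and follows essentially the same route as the paper's proof: identify $\Aut(\Gamma)$ with the full symmetry group of the cube, observe that the central inversion swaps $P_1$ and $P_2$ (hence is a non-type-preserving correlation) while the stabilizer of the inscribed tetrahedra is the type-preserving $\Sym(4)$, and bound $\Aut(\Gamma)$ from above by noting every correlation of $\Gamma$ induces an automorphism of the cube. You merely supply more detail than the paper at two points it leaves implicit, namely the computation $|\Aut(\Sym(4)\times C_2)| = 48$ and the incidence-counting argument showing only types $1$ and $2$ can be interchanged.
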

\begin{proof}
    First, we need to clarify the isomorphism between $G$, the automorphism group of the cube, and $\Sym(4) \times C_2$. Indeed, while the $C_2$ component must necessarily be generated by the central involution of the cube, there are two different subgroups of $G$, both isomorphic to $\Sym(4)$, that can be used in this decomposition. One of them is the rotational subgroup of the cube, which still acts transitively on the vertices, and the other is the group that fixes setwise the two tetrahedra inscribed in the cube, and has thus two orbits on the vertices. When we identify $G$ with $\Sym(4) \times C_2$, we from now on assume that the $\Sym(4)$ component is the group having two orbits on the vertices. 
    Now, note that each element of $G$ induces an automorphism of $\Gamma$, as the bipartition $P$ must be preserved by automorphisms of the cube. The central involution $\alpha$ of $G$ exchanges the sets $P_1$ and $P_2$, and thus is not type preserving. The group $G/\langle \alpha \rangle$ is type preserving, and is precisely $\Inn(G)$. Moreover, the map $G \to \Aut(\Gamma)$ is an injective homomorphism.
%{\color{red}I don't think the explanation above is correct. I would go for the following.
%In $G$, there are two subgroups isomorphic to $\Sym(4)$. One of them, namely the one that does not exchange the two tetrahedra inscribed in a cube, and therefore does not swap vertices with different colors, is the group of type preserving automorphisms of $\Gamma$.
%}
    It only remains to show that there are no other automorphisms of $\Gamma$. Any automorphism of $\Gamma$ would induce an automorphism of the cube, by considering elements of type $1$ and $2$ to both be vertices. Hence, there can be no more.
\end{proof}
\subsection{The dodecahedron}
Let $G = A_5 \times C_2$. Then $\Aut(G) = \Sym(5)$ and $\Inn(G) = A_5$.
Note that the group $G$ is the automorphism group of the dodecahedron and that the central involution sends each point to its antipodal. Let $\Gamma$ be the rank three geometry obtained from taking the quotient of the dodecahedron by its central involution: the hemidodecahedron. 
The hemidodecahedron is a projective polyhedron with ten vertices, fifteen edges and six pentagonal faces. The graph $\GG$ obtained as the $\{0,1\}$-truncation of $\Gamma$ (the $1$-skeleton) is the Petersen graph. While the faces in any polyhedron correspond to the closed paths in the $1$-skeleton obtained by always taking left at each vertex, the so-called Petrie polygons are the closed paths obtained by alternatively taking left and right. The Petersen graph has a total of twelve $5$-cycles. Six of them corresponds to the faces of $\Gamma$ and the other six are the Petrie polygons of $\Gamma$. 

We construct a geometry $\Gamma'$ as follows: the elements of type $0,1,2,3$ are the ten vertices, the fifteen edges, the six pentagonal faces and the six Petrie polygons of $\Gamma$. The $\{0,1,2\}$-truncation of $\Gamma'$ is $\Gamma$ and the $\{0,1,3\}$-truncation of $\Gamma'$ is also isomorphic to $\Gamma$, because the hemidodecahedron is self-Petrie dual; it is isomorphic to the geometry obtained by changing its faces for its Petrie polygons.

%The graph $\GG$ obtained as the $\{0,1\}$-truncation of $\Gamma$ (the $1$-skeleton of the hemidodecahedron) is the Petersen graph. We construct a geometry $\Gamma'$ as follows: the elements of type $0,1,2$ are the ten vertices, the fifteen edges and the six pentagonal faces of $\Gamma$. The elements of type $3$ are the six $5-$cycles of $\GG$ that are not induced by faces of the dodecahedron. (in total there are twelve $5$-cycles in the Petersen graph). These six $5$-cycles can also be defined as the six Petrie polygons of $\Gamma$. The $\{0,1,3\}$-truncation of $\Gamma'$ is isomorphic to $\Gamma$, because the hemidodecahedron is self-Petrie dual; it is isomorphic to the geometry obtained by changing its faces for its Petrie polygons.   

\begin{theorem}
    The geometry $\Gamma'$ is an incidence geometric representation for $A_5 \times C_2$.
\end{theorem}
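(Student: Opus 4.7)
The strategy is to identify $\Aut_I(\Gamma') \cong A_5 \cong \Inn(A_5 \times C_2)$ and $\Aut(\Gamma') \cong \Sym(5) \cong \Aut(A_5 \times C_2)$; the required commutative diagram then follows from the natural inclusions. For the type-preserving part, the $\{0,1,2\}$-truncation of $\Gamma'$ is the hemidodecahedron $\Gamma$, whose automorphism group is the rotation group $A_5$ of the dodecahedron acting on antipodal pairs of vertices. Every such rotation preserves the combinatorially determined Petrie polygons and so lifts uniquely to a type-preserving automorphism of $\Gamma'$, giving an embedding $A_5 \hookrightarrow \Aut_I(\Gamma')$. Conversely, restriction along the $\{0,1,2\}$-truncation yields a map $\Aut_I(\Gamma') \to A_5$ which is injective, since any type-preserving map fixing every vertex and every edge must also fix each Petrie polygon (the latter being determined by its cyclic sequence of edges). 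Hence $\Aut_I(\Gamma') = A_5$.

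Bounding $\Aut(\Gamma')$ is then a matter of counting elements: $|X_0| = 10$, $|X_1| = 15$, $|X_2| = |X_3| = 6$, so any type permutation induced by a correlation must fix $0$ and $1$ and either fix or transpose $2$ and $3$. The self-Petrie duality of the hemidodecahedron used in the construction provides an explicit correlation realising the transposition of $2$ and $3$, so $\Aut(\Gamma')/\Aut_I(\Gamma') \cong C_2$ and $|\Aut(\Gamma')| = 120$.

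The final and most delicate step is to distinguish $\Sym(5)$ from $A_5 \times C_2$; I expect this to be the main obstacle. Assume for contradiction that $\Aut(\Gamma') \cong A_5 \times C_2$. Then there would exist an order-two correlation $c$ centralising every element of $A_5$ and transposing types $2$ and $3$. Its restriction to $X_0$ would centralise the natural degree-$10$ action of $A_5$; since a vertex stabiliser is a self-normalising copy of $S_3$ (each of the ten Sylow $3$-subgroups of $A_5$ lies in a unique $S_3$, forcing $|N_{A_5}(S_3)| = 6$), this centraliser is trivial. Thus $c$ would fix every vertex and, consequently, every edge, sending each face $F$ to a Petrie polygon $P$ with exactly the same vertex and edge set as $F$. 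But the six faces and six Petrie polygons together form twelve distinct $5$-cycles of the Petersen graph, so no face coincides with any Petrie polygon. The contradiction forces $\Aut(\Gamma') \cong \Sym(5)$. An equivalent route would be to verify directly that Petrie duality realises the outer automorphism of $A_5$ by interchanging its two conjugacy classes of $5$-cycles (those corresponding to face rotations versus Petrie-polygon rotations).
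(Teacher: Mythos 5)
Your proof is correct, and it reaches the conclusion by a genuinely different route at the one point where the paper's argument is thinnest. Both proofs agree on the first two stages: identifying $\Aut_I(\Gamma')\cong A_5$ via the $\{0,1,2\}$-truncation (with the observation that an automorphism fixing all vertices and edges must fix each Petrie polygon, since a Petrie polygon is determined by its edge set), and using the cardinalities $|X_0|=10$, $|X_1|=15$, $|X_2|=|X_3|=6$ together with the explicit Petrie duality to pin down $[\Aut(\Gamma'):\Aut_I(\Gamma')]=2$, hence $|\Aut(\Gamma')|=120$. Where you diverge is in distinguishing $\Sym(5)$ from $A_5\times C_2$: the paper asserts that the Petrie duality ``clearly'' does not induce an inner automorphism of $A_5$ and then appeals to the classification of extensions of $A_5$ by $C_2$, whereas you rule out the direct product directly --- a central involution would centralise the transitive degree-$10$ action of $A_5$, whose centraliser is trivial because the point stabiliser $S_3$ is self-normalising, so it would fix every vertex and edge and would therefore have to send some face to a Petrie polygon with the same edge set, contradicting the fact that the twelve pentagons of the Petersen graph are pairwise distinct. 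Your version replaces the paper's unproved ``clearly'' with a short, checkable combinatorial argument, at the modest cost of invoking the standard fact that the centraliser of a transitive action is $N_G(G_x)/G_x$; the paper's version is shorter but leans on an unverified claim about the outer action of the Petrie duality (and, incidentally, miscounts the extensions of $A_5$ by $C_2$, of which there are two with $A_5$ normal, not three). Your closing remark --- that one could instead verify the Petrie duality swaps the two conjugacy classes of $5$-cycles --- is exactly what would be needed to make the paper's own argument rigorous.
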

\begin{proof}
    Let $G = A_5 \times C_2$. We have a natural map $\varphi_1 \colon \Inn(G) \to \Aut_I(\Gamma')$. Indeed, $\Inn(G)$ is precisely $G$ quotiented by its central involution, and is thus the group of automorphism of $\Gamma$. The action on the elements of type $3$ of $\Gamma'$ is induced by the action on the vertices. It is easy to see that $\varphi_1$ must be injective, hence an isomorphism. 

    The Petrie duality of $\Gamma$ induces a correlation of $\Gamma'$ of order $2$ that swaps the elements of type $2$ and $3$, that is, an order two element $\rho$ of $\Aut(\Gamma')\setminus \Aut_{I}(\Gamma')$. The action of $\rho$ on $\Aut_{I}(\Gamma')$ does clearly not induce an inner automorphism of $\Aut_{I}(\Gamma')$, therefore the extension of $\Aut_{I}(\Gamma')$ with $\rho$ is split but cannot be a direct product, and is thus isomorphic to $S_5$ (there are only three possible extensions of $A_5$ by $C_2$). Counting the number of elements of the other types shows that there are no other correlations than in this extension, in other words $\Aut(\Gamma')\cong S_5$.  
%    There is also a map $\varphi_2 \colon \Sym(5) \to \Aut(\Gamma)$, where $\Sym(5)$ is seen as the automorphism group of the Petersen graph. {\color{red}show that isomorphisms of the Petersen graph respect our bipartition of the 5-cycles}
\end{proof}

Note that $\Gamma'$ is also an incidence geometric representation for $A_5$. Indeed, $\Inn(A_5) = \Inn(A_5 \times C_2) = A_5$ and $\Aut(A_5) = \Aut(A_5 \times C_2) = \Sym(5)$.

\section{Field automorphisms and Cross-ratios}

In this section, we give a general construction to model collineations of projective spaces as correlations of an incidence system in such a way that projectivities are type preserving while collineations induced by field automorphisms are not. This will give a general framework to find incidence geometric representations for many subgroups of $\PGL(n,\mathbf{K})$, for $n \geq 3$ and any field $\mathbf{K}$. These representations will only model the outer automorphisms induced by field automorphisms and the ones induced by the classical duality of projective spaces. If the group we are interested in has more outer automorphisms, the construction should be refined to take those into account if we want to obtain incidence geometric representation instead of a weak incidence geometric representation.

\subsection{The projective group $PGL(n,\mathbb{K})$}

We first handle the case of $\PGL(n,\mathbf{K})$, where $n \geq 3$ and $\mathbf{K}$ a field extension of some base field $\mathbf{F}$. Let $\mathbf{P} =\PG(n-1, \mathbf{K})$ be the projective space of (projective) dimension $n-1$ over the field $\mathbf{K}$, seen as an incidence geometry of rank $n-1$. For a quadruple $(p_1,p_2,p_3,p_4)$ of collinear points of $\PG(n-1, \mathbf{K})$, we denote by $[p_1:p_2; p_3:p_4]$ their cross-ratio. Let $\Gal(\mathbf{K}/\mathbf{F})$ be the Galois group of the extension $\mathbf{K}/\mathbf{F}$. We will consider this group $\Gal(\mathbf{K}/\mathbf{F})$ as not only acting on the field $\mathbf{K}$, but also on $\PG(n-1, \mathbf{K})$. The group $\PGL(n,\mathbf{K})$ is in fact the subgroup of $\Aut_I(\mathbf{P})$ that preserves this cross-ratio. The full group $\Aut_I(\mathbf{P})$ of type preserving automorphisms of $\mathbf{P}$ is usually denoted by $\PGammaL(n,\mathbf{K})$ and is isomorphic to $\PGL(n,\mathbf{K}) \rtimes \Gal(\mathbf{K}/\mathbf{F})$. The group $\Aut(\mathbf{P})$ of all correlations of $\mathbf{P}$ is then isomorphic to $\Aut(\PGL(n,\mathbf{K}))$ and is an extension of $\PGammaL(n,\mathbf{K})$ by the duality that exchanges points and hyperplanes of $\mathbf{K}$.
 We now construct a geometry $\Gamma = \Gamma(\PGL(n,\mathbf{K})) =(X,*,\tau,I)$ as follows.

\begin{itemize}
    \item The set $X$ of elements of $\Gamma$ is the disjoint union of the elements of $\mathbf{P}$, and the sets $Q_\lambda = \{ (p_1,p_2,p_3,p_4) \mid p_1,p_2,p_3,p_4 \text{ collinear and } [p_1:p_2; p_3:p_4] = \lambda \}$ for all $\lambda \in \mathbf{K} \setminus \mathbf{F}$.
    \item The type set $I = \{0,1, \cdots, n-2\} \sqcup (\mathbf{K} \setminus \mathbf{F})$.
    \item The type function $\tau$ sends elements of $\mathbf{P}$ to their projective dimension and elements of $Q_\lambda$ to $\lambda$.
    \item Incidence between elements of $\mathbf{P}$ is the same as in $\mathbf{P}$. An element $x \in \mathbf{P}$ is incident to a quadruple $(p_1,p_2,p_3,p_4) \in Q_\lambda$ if and only if $x = p_i$ for some $i = 1,2,3,4$ for some $p_i$ or $\{p_1,p_2,p_3,p_4\} \subset x$. Finally, any quadruple $(p_1,p_2,p_3,p_4) \in Q_{\lambda_1}$ is incident to any quadruple $(q_1,q_2,q_3,q_4) \in Q_{\lambda_2}$ as long as $\lambda_1 \neq \lambda_2$.
\end{itemize}

Notice that $\mathbf{P}$ is isomorphic to the truncation of $\Gamma$ of type $\{0,1,\cdots n-2\}$. We could have considered all quadruples of collinear points, even those whose cross-ratios have values in $\mathbf{F}$. We instead chose to only work with cross-ratios whose values are in $\mathbf{K} \setminus \mathbf{F}$ so that when $\mathbf{K} = \mathbf{F}$, we have that $\Gamma$ is isomorphic to  $\mathbf{P}$, which in that case is already a incidence geometric representation for $\PGL(n, \mathbf{F})$. We now show that $\Gamma$ is always an incidence geometric representation for $\PGL(n,\mathbf{K})$. Then, we will show how to reduce the numbers of types of $\Gamma$ when the extension $\mathbf{K}/\mathbf{F}$ is simple.

%is made out of the points and lines of $\PG(n-1, \mathbf{K})$ together with the sets $ Q_\lambda = \{ (p_1,p_2,p_3,p_4) \mid p_1,p_2,p_3,p_4 \text{ collinear and } [p_1:p_2; p_3:p_4] = \lambda \}$ for any $\lambda \in \mathbf{K} \setminus \mathbf{F}$. More precisely, the elements of $\Gamma$ are $X_\Gamma = \mathcal{P} \sqcup \mathcal{L} \sqcup (\sqcup_{\lambda \in \mathbf{K} \setminus \mathbf{F}} Q_\lambda)$ where $\mathcal{P}$ and $\mathcal{L}$ designate the set of points and lines of $\PG(n-1, \mathbf{K})$, respectively. The type of elements of $\mathcal P$ is $0$, the type of elements of $\mathcal L$ is $1$ and the type of elements of $Q_\lambda$ is $\lambda$. Incidence between points and lines is induced by incidence in $\PG(n-1, \mathbf{K})$. A point $p$ is incident to a quadruple $(p_1,p_2,p_3,p_4)$ if and only if one of the $p_i$ is equal to $p$. Similarly, a line $L$ is incident to a quadruple $(p_1,p_2,p_3,p_4)$ if $L$ contains all of the $p_i$, $i = 1,2,3,4$. Finally, every quadruple in $Q_{\lambda_1}$ is incident to every quadruple in $Q_{\lambda_2}$, as long as $\lambda_1 \neq \lambda_2$.

\begin{theorem}\label{thm:PGL}
    The incidence system $\Gamma(\PGL(n,\mathbf{K}))$ is an incidence geometric representation for $\PGL(n,\mathbf{K})$.
    %We have that $\Aut_I(\Gamma) \cong \PGL(n,K)$ and that $Gal(\mathbf{K}/\mathbf{F})$ is isomorphic to $\Aut(\Gamma)/\Aut_I(\Gamma)$. In other words, $\Gamma$ is an incidence geometric representation for $\PGL(n,K) \rtimes Gal(\mathbf{K}/\mathbf{F})$.
\end{theorem}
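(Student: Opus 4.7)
The plan is to construct an injection $\Phi:\Aut(\PGL(n,\mathbf{K}))\hookrightarrow\Aut(\Gamma)$ whose type-preserving image is exactly $\PGL(n,\mathbf{K})$, and then to show that $\Phi$ is surjective. Using the classical identification
$$\Aut(\PGL(n,\mathbf{K}))\cong\PGammaL(n,\mathbf{K})\rtimes\langle\delta\rangle\cong(\PGL(n,\mathbf{K})\rtimes\Gal(\mathbf{K}/\mathbf{F}))\rtimes\langle\delta\rangle,$$
it suffices to lift each of three natural families to correlations of $\Gamma$.

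First I would construct the correlations. For $g\in\PGL(n,\mathbf{K})$, the action on $\mathbf{P}$ preserves both projective dimension and the cross-ratio, so $g$ acts as a type-preserving automorphism of $\Gamma$ fixing each $Q_\lambda$ setwise. For a field automorphism $\sigma\in\Gal(\mathbf{K}/\mathbf{F})$, the induced action on $\PG(n-1,\mathbf{K})$ preserves dimensions but sends a quadruple of cross-ratio $\lambda$ to one of cross-ratio $\sigma(\lambda)\in\mathbf{K}\setminus\mathbf{F}$, yielding a correlation that permutes the cross-ratio labels via the Galois action while fixing the projective labels. After fixing a polarity, the duality $\delta$ interchanges projective types $i$ and $n-2-i$; on a quadruple of collinear points on a line $\ell$, the image under $\delta$ is a pencil of four hyperplanes through $\delta(\ell)$, and the polarity identifies this pencil with a quadruple of collinear points having the same cross-ratio, giving a correlation fixing each $Q_\lambda$ setwise. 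A routine check shows the subgroup $\widetilde G\le\Aut(\Gamma)$ generated by these three families is isomorphic to $\Aut(\PGL(n,\mathbf{K}))$, with $\widetilde G\cap\Aut_I(\Gamma)=\PGL(n,\mathbf{K})$, since an element of $\widetilde G$ is type-preserving exactly when its Galois and dual components are trivial.

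For surjectivity, take any $\alpha\in\Aut(\Gamma)$. The key step is to show that $\alpha$ preserves the partition of the type set $I$ into the projective block $\{0,\ldots,n-2\}$ and the cross-ratio block $\mathbf{K}\setminus\mathbf{F}$; I would argue this via the combinatorial characterisation that any two distinct cross-ratio types are totally incident elementwise (complete bipartite incidence), whereas no two projective types share this property. Once this is established, the restriction of $\alpha$ to the projective truncation is a correlation of $\mathbf{P}$, and by the Fundamental Theorem of Projective Geometry (applicable for $n\geq 3$) this restriction lies in $\PGammaL(n,\mathbf{K})\rtimes\langle\delta\rangle$. After composing $\alpha$ with a suitable preimage in $\widetilde G$, we may assume $\alpha$ fixes $\mathbf{P}$ pointwise; then the only permutation of cross-ratio labels compatible with this is the identity (since a quadruple with fixed points has fixed cross-ratio), and the incidence constraints with the points $p_i$ force $\alpha(q)=q$ for every quadruple $q$. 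Hence $\alpha\in\widetilde G$.

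The main obstacle I anticipate is producing a clean, canonical definition of the duality $\delta$ on the cross-ratio elements of $\Gamma$, since a pencil of hyperplanes is not literally an element of $\Gamma$ and the polarity-based identification requires careful bookkeeping, especially in the low-rank case $n=3$ where the pencil's axis $\delta(\ell)$ collapses to a single point. A secondary technicality is establishing the incidence-theoretic characterisation of the projective vs.\ cross-ratio dichotomy on the type set in a way that works uniformly over arbitrary field extensions $\mathbf{K}/\mathbf{F}$, including the case where $\mathbf{K}$ is infinite so that naive cardinality arguments are unavailable.
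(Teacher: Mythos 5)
Your strategy---separate the type set into the projective block and the cross-ratio block, restrict a correlation of $\Gamma$ to the projective truncation, apply the Fundamental Theorem of Projective Geometry, and argue that this restriction determines the action on the quadruples---is the same as the paper's, and your complete-bipartite criterion for isolating the cross-ratio types is a clean variant of the paper's observation that a quadruple is incident to a unique line. But the two points you defer as technicalities are not technicalities. The first is a genuine obstruction: the duality does not lift to a correlation of $\Gamma$ at all, so no polarity bookkeeping will produce $\Phi(\delta)$. Concretely, a quadruple $q\in Q_\lambda$ with supporting line $\ell$ is incident to exactly four elements of type $0$ (its entries, which are distinct since $\lambda\notin\{0,1\}$), while the elements of type $n-2$ incident to $q$ are \emph{all} hyperplanes containing $\ell$, a dual $\PG(n-3,\mathbf{K})$: a single hyperplane when $n=3$, and at least $|\mathbf{K}|+1\geq 5$ of them when $n\geq 4$. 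If a correlation $\alpha$ induced the type permutation $i\mapsto n-2-i$ on the projective block, the type-$(n-2)$ elements incident to $\alpha(q)$ would be the images of the type-$0$ elements incident to $q$, hence exactly four in number; but $\alpha(q)$ is itself a quadruple and is incident to all hyperplanes through its own supporting line, never a set of size four. So every correlation of $\Gamma$ fixes the projective types, $\Aut(\Gamma)$ maps onto at most $\PGammaL(n,\mathbf{K})$ rather than $\Aut(\PGL(n,\mathbf{K}))$, and your injection $\Phi$ cannot be defined on the duality coset. (The paper's proof dismisses this case with ``the general case is identical''; the count above shows it is not.)

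The second gap is in your final forcing step, ``the incidence constraints with the points $p_i$ force $\alpha(q)=q$.'' A quadruple is \emph{not} the unique element incident to exactly its four entries: any reordering of the same four points is incident to exactly the same elements of $\Gamma$, since incidence of a quadruple with points and subspaces depends only on its underlying set, and incidence between quadruples depends only on their types. In particular, the map that fixes $\mathbf{P}$ pointwise and sends every $(p_1,p_2,p_3,p_4)$ to $(p_2,p_1,p_4,p_3)$ preserves the cross-ratio, hence every $Q_\lambda$ setwise and every incidence; it is a nontrivial element of $\Aut_I(\Gamma)$ not induced by $\PGL(n,\mathbf{K})$, so even the identification $\Aut_I(\Gamma)\cong\PGL(n,\mathbf{K})$ is not delivered by this argument (the paper's proof asserts the same false uniqueness). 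To salvage the construction one would have to modify $\Gamma$ itself---for example, replace ordered quadruples by unordered ones carrying the full orbit of cross-ratio values, and add elements dual to the quadruples (pencils of four hyperplanes) so that the duality has somewhere to send them---rather than refine the proof.
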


\begin{proof}
    We have that $\Inn(\PGL(n,\mathbf{K})) \cong \PGL(n,\mathbf{K})$ and $\Aut(\PGL(n,\mathbf{K})) \cong \Aut(\mathbf{P})$, which is an extension of $\PGL(n,\mathbf{K})$ by $\Gal(\mathbf{K}/\mathbf{F})$ and by the duality of $\mathbf{P}$. We must thus show that $\Aut(\Gamma) \cong \Aut(\mathbf{P})$ and $\Aut_I(\Gamma) \cong \PGL(n,\mathbf{K})$.
    
    Let $\varphi \in \Aut(\Gamma)$. Then, $\varphi$ must either send elements of type $0$ to elements of type $0$ or exchange elements of type $0$ and elements of type $n-2$. Indeed, it should be clear that elements of $\mathbf{P}$ cannot be sent to elements of $Q_\lambda$ for some $\lambda$. For example, an element $x \in Q_\lambda$ is incident to a unique element of type $1$ (the line that contains $x$), while elements of $\mathbf{P}$ are clearly incident to multiple elements of every type. Therefore, $\varphi$ acts on the $\{0,1, \cdots n-2\}$-truncation of $\Gamma$, which is isomorphic to $\mathbf{P}$. Let $f$ be the restriction of $\varphi$ to this $\{0,1, \cdots, n-{\color{red}2}\}$-truncation of $\Gamma$. We must have that $f$ is a collineation of $\PG(n-1, \mathbf{K})$, so that $f \in \PGammaL(n,\mathbf{K})$ if $f$ is type preserving and $f \in \Aut(\PGL(n,\mathbf{K}))/ \PGammaL(n,\mathbf{K})$ if $f$ exchanges points and hyperplanes. We claim that $f$ determines $\varphi$ entirely. For simplicity, we assume that $f$ is type preserving, but the general case is identical. Let $x =(p_1,p_2,p_3,p_4)$ be any element of $Q_\lambda$ for some $\lambda \in \mathbf{K} \setminus \mathbf{F}$. Then the $0$-elements of $\Gamma$ incident to $x$ are precisely $p_1,p_2,p_3,p_4$. It must then be that $\varphi(x)$ is incident to $f(p_i)$, $i = 1,2,3,4$ and to no other $0$-elements. The unique element of $\Gamma$ that satisfies this is the quadruple $(f(p_1),f(p_2),f(p_3),f(p_4))$ which must the be $\varphi(x)$. Hence, since we showed that there is a unique way to extend the action of $f$ to the rest of the elements of $\Gamma$, we conclude that $\Aut(\Gamma) \cong \Aut(\PGL(n,\mathbf{K}))$. 

    It is straightforward to check that the subgroup of $\Aut(\Gamma)$ corresponding to $\PGL(n,\mathbf{K})$ is type preserving while $\varphi \in \Gal(\mathbf{K}/\mathbf{F})$ sends elements of $Q_\lambda$ to elements of $Q_{\varphi(\lambda)}$ and the duality exchanges elements of type $0$ and $n-2$. This concludes the proof.    
    %Let $\varphi \in \Aut_I(\Gamma)$. Then, $\varphi$ must act as a projectivity on the $\{0,1\}$-truncation of $\Gamma$, since this truncation is isomorphic to $\PG(n-1, \mathbf{K})$. Let $f$ denote this collineation. We claim that f determines the action of $\varphi$ on the rest of $\Gamma$. Indeed, by definition of the incidence between elements of type $\lambda$ and elements of type $0$ and $1$, we must have that for any element $x = (p_1,p_2,p_3,p_4) \in Q_\lambda$, $\varphi(x) = (f(p_1),f(p_2),f(p_3),f(p_4)) \in Q_{\lambda}$, since $\varphi(x)$ must be incident to exactly $f(p_1),f(p_2),f(p_3)$ and $f(p_4)$. We thus showed that $\Aut_I(\Gamma) \subset \PGL(n,\mathbf{K})$. The reverse inclusion is evident.
%
    %The group $Gal(\mathbf{K}/\mathbf{F})$ can be seen as a group of outer automorphisms of $\PGL(n,\mathbf{K})$. As such, there is a natural action of $Gal(\mathbf{K}/\mathbf{F})$ on $\Gamma$. Under this action, any element $f \in Gal(\mathbf{K}/\mathbf{F})$ will send an element $(p_1,p_2,p_3,p_4) \in Q_\lambda$ to $((f(p_1),f(p_2),f(p_3),f(p_4)) \in Q_{f(\lambda)}$. The group $Gal(\mathbf{K}/\mathbf{F})$ thus embeds as a group of correlations of $\Gamma$. We claim that there are no other correlations of $\Gamma$ ({\color{red} I suppose that for $n = 3$, we have a duality changing point and lines?}). 
\end{proof}

Suppose now that $\mathbf{K}$ is a simple extension of $\mathbf{F}$, meaning that $\mathbf{K} = \mathbf{F[\alpha]}$ for some $\alpha \in \mathbf{K}$. Let $P(\alpha)$ be the minimal polynomial for $\alpha$ with respect to the extension $\mathbf{K}/\mathbf{F}$. Let $R$ be the set of roots of $P(\alpha)$. We can construct an incidence system that only takes into account cross-ratios with values in $R$. We define $\Gamma(\alpha)$ to be equal to $^J\Gamma$, the truncation of $\Gamma$ of type $J = \{0,1\} \cup R$. 

\begin{corollary}
    Let $\mathbf{K} = \mathbf{F}[\alpha]$ be a simple field extension. Then, the incidence system $\Gamma(\alpha)$ is an incidence geometric representation for
    $\PGL(n,\mathbf{K})$.
\end{corollary}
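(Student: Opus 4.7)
The plan is to reduce to Theorem~\ref{thm:PGL}: the geometry $\Gamma(\alpha)$ is simply the truncation of $\Gamma$ obtained by keeping only the cross-ratio types that lie in $R$, so it suffices to verify that (i) every correlation of $\Gamma$ corresponding to an element of $\Aut(\PGL(n,\mathbf{K}))$ preserves the truncated type set $J = \{0,1\} \cup R$, and (ii) no new correlations are introduced by the truncation.

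For (i), the key observation is that $R$ is stable under $\Gal(\mathbf{K}/\mathbf{F})$: by definition $R$ is the set of roots of the irreducible polynomial $P(\alpha)$ lying in $\mathbf{K}$, and every $\sigma \in \Gal(\mathbf{K}/\mathbf{F})$ permutes these roots. Thus the action sending $Q_\lambda$ to $Q_{\sigma(\lambda)}$, already constructed in the proof of Theorem~\ref{thm:PGL}, restricts to $\Gamma(\alpha)$. Projectivities in $\PGL(n,\mathbf{K})$ fix each cross-ratio, so they act type-preservingly on $\Gamma(\alpha)$; and the natural duality (in the cases where it acts on the type pair $\{0,1\}$) likewise preserves $J$. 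This produces an injective homomorphism $\Aut(\PGL(n,\mathbf{K})) \hookrightarrow \Aut(\Gamma(\alpha))$ whose type-preserving subgroup contains $\PGL(n,\mathbf{K})$.

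For (ii), I would follow the endgame of the proof of Theorem~\ref{thm:PGL}. A correlation $\varphi$ of $\Gamma(\alpha)$ cannot exchange projective elements with quadruples, since every element of $Q_\lambda$ is incident to exactly four elements of type $0$, while points and lines have much richer incidence. Therefore $\varphi$ restricts to an incidence-preserving bijection of the $\{0,1\}$-truncation, namely the point-line geometry of $\PG(n-1,\mathbf{K})$. By the Fundamental Theorem of Projective Geometry (which applies as $n-1 \geq 2$), this restriction is induced by an element $f \in \PGammaL(n,\mathbf{K})$, possibly composed with a duality in the rank-two case. The action of $\varphi$ on a quadruple $(p_1,p_2,p_3,p_4) \in Q_\lambda$ is then forced: the image must be the unique quadruple $(f(p_1),f(p_2),f(p_3),f(p_4))$, whose cross-ratio is obtained from $\lambda$ by the Galois component of $f$, an element of $R$ by stability.

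The main obstacle is ruling out spurious correlations that might arise because the truncated geometry has a smaller type set, and hence a priori more room for type permutations. This is exactly where the FTPG-based rigidity on the point-line substructure is essential: once $\varphi$ is pinned down on types $0$ and $1$, the incidence pattern of every quadruple with its four constituent points determines $\varphi$ uniquely on the remaining types. Combined with step (i), this yields $\Aut(\Gamma(\alpha)) \cong \Aut(\PGL(n,\mathbf{K}))$ with type-preserving subgroup $\PGL(n,\mathbf{K})$, as required.
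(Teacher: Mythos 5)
Your proposal is correct and takes essentially the same route as the paper: the paper's entire proof of this corollary is the single sentence that the proof of Theorem~\ref{thm:PGL} holds verbatim for $\Gamma(\alpha)$, and your argument is a faithful expansion of that reduction, with the Galois-stability of the root set $R$ (which the paper leaves implicit) correctly identified as the one point that needs checking. Your parenthetical hedge about the duality acting on the type pair $\{0,1\}$ touches a genuine subtlety that the paper glosses over as well --- for $n\geq 4$ the duality of $\PG(n-1,\mathbf{K})$ does not restrict to the point--line truncation --- but since the paper's own proof makes the identical silent assumption, your writeup matches it.
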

\begin{proof}
    The proof of Theorem \ref{thm:PGL} holds verbatim in this case also.
\end{proof}

For example, if the fields $\mathbf{K}$ and $\mathbf{F}$ are finite, the extension $\mathbf{K}/\mathbf{F}$ is always cyclic. If $\phi$ is the Frobenius automorphism of order $m$, we can then choose one $\lambda$ such that $\phi(\lambda) \neq \lambda$ and the set of roots of a minimal polynomial will be $Q_\lambda, Q_{\phi(\lambda)}, \cdots, Q_{\phi^{m-1}(\lambda)}$.

\subsection{Subgroups of $\PGL(n,\mathbf{K})$}\label{classicalGroups}

A similar construction can be used for a variety of subgroups $H$ of $\PGL(n,\mathbf{K})$, as long we can describe them as stabilizer of subsets of $\PG(n-1, \mathbf{K})$. In some cases, these subgroups have additional outer automorphisms that do not come from fields automorphisms, but the geometry $\Gamma$ that we construct is only expected to be an incidence geometric representation for $(H,H\rtimes Gal(\mathbf{K}/\mathbf{F}))$, not for $H$.
We give a short and nonexhaustive list of examples of such subgroups $H$ together with a quick description of the geometry $\Gamma$ that could be investigated but without going into all the details.
\begin{enumerate}
    \item Orthogonal groups $PGO(n,\mathbf{K})$ are stabilizers of nondegenerate conics $\OO$. We can thus restrict to considering interior points of $\OO$ and secants lines to $\OO$.
    \item $\PGL(2,q) \cong PGO(3,q)$ so it falls into the above setting.
    \item Suzuki groups are stabilizers of Suzuki-Tits ovoids $\OO$. We can then take $\mathcal{P}$ to be the set of points of $\mathbf{P}$, $\mathcal{L}$ to be the set of circles of $\OO$ (intersection of hyperplanes with $\OO$) and consider quadruples of points on tangent lines to $\OO$. 
\end{enumerate}

\section{Free groups}\label{freeGroups}

Let $F_n = \langle x_1, \cdots, x_n \rangle$ be a free group of rank $n$. Contrary to the previous examples, both $F_n$ and its outer automorphism group $\Out(F_n)$ are infinite. Finding an incidence geometric representation for $F_n$ would require to construct an incidence system $\Gamma$ with $\Aut(\Gamma) = \Aut(F_n)$ and $\Aut_I(\Gamma) = \Inn(F_n)$. Then $\Aut(\Gamma)/\Aut_I(\Gamma) \cong \Out(F_n)$, which is infinite, so our incidence system would have infinitely many types. As the theory of incidence geometries with an infinite number of types still is underdeveloped, this is beyond our reach at the moment. Hence, in the sequel we decide to focus on finite subgroups of $\Out(F_n)$. 

Observe that finite subgroups of $\Aut(F_n)$ and finite subgroups of $\Out(F_n)$ are essentially the same, since $F_n \cong \Inn(F_n)$ is torsion-free. A realization theorem says that finite subgroups of $\Out(F_n)$ are point stabilizers under the action of $\Out(F_n)$ on Outer Space \cite{Vogtmann2002}. Therefore, they are isometry groups of finite graphs whose vertices all have valency at least three and whose fundamental group is $F_n$. That implies that the largest finite subgroup of $\Aut(F_n)$ is the isometry group $K$ of a rose with $n$ petals, which is the automorphism group of the $n$-dimensional hypercube. It is isomorphic to $K = C_2^n \rtimes \Sym(n)$ (see~\cite{FiniteOut} for more details).
Concretely, the $\Sym(n)$ factor is generated by permutations of the generators $\{x_1, \cdots, x_n\}$ and the involuntary involutions of the $C_2^n$ factor correspond to $\varphi_i$ sending $x_i$ to $x_i^{-1}$ while fixing the other generators, for $i = 1, \cdots, n$.

Let us recall how to construct an incidence system from a group and some of its subgroups. Such incidence systems are called {\em coset incidence systems} as the elements of these geometries are cosets of the subgroups.

\begin{proposition}\cite{Tits1956}\label{tits}
Let $n$ be a positive integer
and $I:= \{0,\ldots ,r-1\}$ a finite set.
Let $G$ be a group together with a family of subgroups ($G_i$)$_{i \in I}$, $X$ the set consisting of all cosets $G_ig$, $g \in G$, $i \in I$ and $t : X \rightarrow I$ defined by $t(G_ig) = i$.
Define an incidence relation $*$ on $X\times X$ by :
\begin{center}
$G_ig_1 * G_jg_2$ iff $G_ig_1 \cap G_jg_2$ is non-empty in $G$.
\end{center}
Then the 4-tuple $\Gamma := (X, *, t, I)$ is an incidence system having a chamber.
Moreover, the group $G$ acts by right multiplication as an automorphism group on $\Gamma$.
Finally, the group $G$ is transitive on the flags of rank less than three.
\end{proposition}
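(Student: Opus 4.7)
The plan is to verify each of the four claims separately, since the proposition is essentially a bookkeeping statement once the coset construction is unpacked. I would start by checking that $\Gamma=(X,*,t,I)$ satisfies the definition of an incidence system as given in the Preliminaries: surjectivity of $t$ is immediate from the indexing; symmetry of $*$ follows because $G_i g_1 \cap G_j g_2 = G_j g_2 \cap G_i g_1$; and the required property that distinct elements of the same type are not incident is exactly the standard fact that two cosets of the same subgroup are either equal or disjoint (so a nonempty intersection forces equality).

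Next, to exhibit a chamber I would take the flag $C = \{G_0, G_1, \ldots, G_{r-1}\}$ consisting of the subgroups themselves, viewed as cosets of the identity element. Since $e \in G_i$ for every $i \in I$, we have $e \in G_i \cap G_j$ for all $i,j$, so all pairs are incident and $C$ is a chamber of type $I$.

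For the action of $G$ by right multiplication I would fix $h \in G$ and define $\rho_h(G_i g) := G_i g h$. This is well-defined because $G_i g = G_i g'$ implies $G_i g h = G_i g' h$; it is a bijection of $X$ with inverse $\rho_{h^{-1}}$; and it preserves types by definition. The key identity $(G_i g_1 h) \cap (G_j g_2 h) = (G_i g_1 \cap G_j g_2) h$ shows that $\rho_h$ preserves incidence, so $\rho_h \in \mathrm{Aut}_I(\Gamma)$, and $h \mapsto \rho_h$ is a homomorphism $G \to \mathrm{Aut}_I(\Gamma)$.

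For transitivity on flags of rank at most two I would argue by reduction to the standard chamber. For rank $1$, any two cosets $G_i g_1$ and $G_i g_2$ of the same type are related by right multiplication by $g_1^{-1}g_2$. For rank $2$, given an incident pair $\{G_i g_1, G_j g_2\}$ with $i\neq j$, pick some $h \in G_i g_1 \cap G_j g_2$; then $G_i g_1 = G_i h$ and $G_j g_2 = G_j h$, so right multiplication by $h^{-1}$ sends this flag to the standard flag $\{G_i, G_j\} \subset C$, which gives transitivity on flags of any rank-$2$ type. The empty and rank-$0$ cases are trivial. No step here is a real obstacle: the main thing to be careful about is keeping track of right versus left cosets so that the action of $G$ by right multiplication is genuinely well-defined on the coset set and compatible with the intersection-based incidence.
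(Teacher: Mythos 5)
Your proof is correct and is the standard verification; the paper itself gives no proof of this proposition (it is quoted from Tits's 1956 work), and your argument --- disjointness of distinct cosets of a fixed subgroup, the chamber $\{G_0,\dots,G_{r-1}\}$ through the identity, the identity $(G_ig_1\cap G_jg_2)h = G_ig_1h\cap G_jg_2h$, and reduction of any rank-$\le 2$ flag to the standard one via an element of the intersection --- is exactly the expected one. The only cosmetic point is that $h\mapsto\rho_h$ with $\rho_h(G_ig)=G_igh$ is an anti-homomorphism under the usual composition convention, which is why one speaks of a right action; this does not affect the conclusion that $G$ maps into $\mathrm{Aut}_I(\Gamma)$.
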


The above proposition is sometimes referred to as Tits algorithm to construct incidence systems from groups.
We will denote the coset incidence system of $G$ over $(G_i)_{i\in I}$ obtained from the construction described in Proposition~\ref{tits} by $\Gamma(G; (G_i)_{i\in I})$.
Moreover, we set $G_J = \cap_{j\in J}G_j$.
The following theorem permits to check when a coset incidence system is a flag-transitive incidence geometry.

 \begin{theorem}\label{FTlee2}\cite[Theorem 1.8.10 (i) and (iv)]{BC2013}
Let $\Gamma$ be a coset incidence system of $G$ over $(G_i)_{i\in I}$. If $I$ is finite, the following statements are equivalent.
\begin{enumerate}
    \item $G$ is flag-transitive on $\Gamma$.
    \item for each $J\subseteq I$ and each $i\in I\setminus J$, we have $G_JG_i = \cap_{j\in J}(G_jG_i)$.
\end{enumerate}
\end{theorem}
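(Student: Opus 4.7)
The plan is to prove the two directions separately. For $(1) \Rightarrow (2)$ the approach is to exhibit, given $g$ in the right-hand side, two specific flags whose only difference lies in the $i$-element, and to extract the required factorisation from an element of $G$ identifying them. For $(2) \Rightarrow (1)$ the plan is to induct on $|J|$, using Proposition~\ref{tits} as the base case (it already supplies transitivity on flags of rank at most two) and invoking condition $(2)$ precisely at the step that promotes flag-transitivity from rank $n-1$ to rank $n$.

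For $(1) \Rightarrow (2)$, the inclusion $G_JG_i \subseteq \bigcap_{j \in J}(G_jG_i)$ is immediate from $G_J \leq G_j$. For the reverse inclusion, let $g \in \bigcap_{j \in J}(G_jG_i)$, so that $g^{-1} \in G_iG_j$ for every $j \in J$, which is the statement that $G_ig^{-1} \cap G_j \neq \emptyset$ for every such $j$. Hence $\{G_j\}_{j \in J} \cup \{G_i\}$ and $\{G_j\}_{j \in J} \cup \{G_ig^{-1}\}$ are both flags of type $J \cup \{i\}$. Flag-transitivity yields $h \in G$ sending the first to the second; from $G_jh = G_j$ for all $j \in J$ we deduce $h \in G_J$, and from $G_ih = G_ig^{-1}$ we deduce $hg \in G_i$, so that $g = h^{-1}(hg) \in G_JG_i$.

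For $(2) \Rightarrow (1)$, I would induct on $|J|$, with the cases $|J| \leq 2$ covered by Proposition~\ref{tits}. For $|J| \geq 3$, let $F = \{G_jg_j\}_{j \in J}$ and $F' = \{G_jg'_j\}_{j \in J}$ be two flags of type $J$, pick $i \in J$, and set $J' := J \setminus \{i\}$. By the inductive hypothesis applied to rank $|J|-1$ flags, some element of $G$ moves $F|_{J'}$ onto $F'|_{J'}$; replacing $F$ by its translate I assume $G_jg_j = G_jg'_j$ for all $j \in J'$. Applying the inductive hypothesis once more, this time to move the common $J'$-part onto $\{G_j\}_{j \in J'}$, I further assume $g_j = g'_j = 1$ for every $j \in J'$. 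It now suffices to find $k \in G_{J'}$ with $G_ig_ik = G_ig'_i$. Since $F$ is a flag, $G_ig_i \cap G_j \neq \emptyset$ for every $j \in J'$, equivalently $g_i \in \bigcap_{j \in J'}(G_iG_j)$, equivalently $g_i^{-1} \in \bigcap_{j \in J'}(G_jG_i)$, which equals $G_{J'}G_i$ by hypothesis $(2)$. Writing $g_i^{-1} = ba$ with $b \in G_{J'}$ and $a \in G_i$ gives $G_ig_i = G_ib^{-1}$, and likewise $G_ig'_i = G_i(b')^{-1}$ with $b' \in G_{J'}$. Then $k := b(b')^{-1} \in G_{J'}$ satisfies $G_ib^{-1}k = G_i(b')^{-1}$, closing the induction.

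The main obstacle is conceptual rather than computational: pairwise incidence of cosets does not automatically yield a common point of intersection, so the naive translation argument would fail without a hypothesis controlling $\bigcap_{j \in J'}(G_iG_j)$. Condition $(2)$ is precisely the coset-theoretic statement needed to bridge this gap, and is why it arises as the critical ingredient at the inductive step; the rest of the proof is formal manipulation of double cosets.
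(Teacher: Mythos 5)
Your proof is correct. Note, however, that the paper does not prove this statement at all: it is quoted verbatim from Buekenhout--Cohen \cite[Theorem 1.8.10]{BC2013}, so there is no in-paper argument to compare against. Your two directions --- extracting the factorisation $g=h^{-1}(hg)$ from a type-preserving element carrying the standard flag $\{G_j\}_{j\in J}\cup\{G_i\}$ to $\{G_j\}_{j\in J}\cup\{G_ig^{-1}\}$, and the induction on $|J|$ anchored by the rank-$\le 2$ transitivity of Proposition~\ref{tits} with condition (2) supplying the element $k\in G_{J'}$ at the inductive step --- are sound and are essentially the standard textbook argument for this equivalence.
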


    We now prove that any coset incidence system constructed from an independent set of elements of a free groups is necessarily a flag-transitive geometry.  In fact, free groups together with a free generating set form Artin-Tits systems. Using known results about intersections of parabolic subgroups of Artin-Tits systems (see \cite{ParisGodelle2012}, \cite{van1983homotopy}), it can be shown that the standard coset incidence geometry built from any Artin-Tits system is always flag-transitive and residually connected, similarly as the case of Coxeter systems. The details of this claim are explained in a forthcoming paper \cite{AmalgamsIncidence}.
\begin{theorem}\label{freeFT}
    Let $I$ be a finite set and let $\{g_i : i \in I\}$ be an independent set of elements of the free group $F_n$. For each $i\in I$, let $G_i := \langle g_j : j \in I\backslash \{i\} \rangle$.
    Let $G = \langle g_i : i \in I \rangle$.
    Let $\Gamma = (G, (G_i)_{i \in I})$ be the coset geometry constructed from $G$ and the subgroups $G_i$'s.  Then $\Gamma$ is flag-transitive.
\end{theorem}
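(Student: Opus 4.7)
By Theorem~\ref{FTlee2}, flag-transitivity of $\Gamma$ is equivalent to the equality
\[
G_J G_i \;=\; \bigcap_{j \in J}(G_j G_i)
\]
for every $J \subseteq I$ and every $i \in I \setminus J$. The inclusion $\subseteq$ is trivial since $G_J \subseteq G_j$ for each $j\in J$, so all the work lies in the reverse inclusion. The strategy is to exploit the fact that $\{g_i : i \in I\}$ is an independent (hence free) generating set of $G$, so every element of $G$ has a unique reduced word in the alphabet $\{g_i^{\pm 1} : i \in I\}$. First I would record the standard fact that for any $T \subseteq I$, the subgroup $\langle g_k : k \in T \rangle$ consists precisely of those elements whose reduced word uses only letters from $\{g_k^{\pm 1} : k \in T\}$; from this it follows immediately that $G_J = \bigcap_{j \in J} G_j = \langle g_k : k \in I \setminus J\rangle$.

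The main technical step is a characterization of products of such subgroups. Given $T,T' \subseteq I$, I claim that an element $g \in G$ lies in $\langle T \rangle \cdot \langle T' \rangle$ if and only if its reduced word $g = x_1 x_2 \cdots x_n$ admits a splitting position $p \in \{0, 1, \dots, n\}$ such that $x_1 \cdots x_p$ uses only letters indexed by $T$ and $x_{p+1} \cdots x_n$ uses only letters indexed by $T'$. The $\Leftarrow$ direction is immediate. For $\Rightarrow$, suppose $g = ab$ with $a$ reduced in $T^{\pm 1}$ and $b$ reduced in $T'^{\pm 1}$; the only cancellations that occur when concatenating the reduced words of $a$ and $b$ happen at the junction, and the cancelling letters belong to both $T^{\pm 1}$ and $T'^{\pm 1}$. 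After maximal cancellation one is left with a reduced word whose prefix is still in $T^{\pm 1}$ and whose suffix is still in $T'^{\pm 1}$, giving the desired splitting point.

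With this characterization in hand, membership $g \in G_j G_i$ translates, in terms of the reduced word $g = x_1 \cdots x_n$, into the existence of a split point $p$ such that $x_1, \dots, x_p$ avoid the letters $g_j^{\pm 1}$ and $x_{p+1}, \dots, x_n$ avoid the letters $g_i^{\pm 1}$. Define
\[
p^* \;=\; \text{the largest index at which $g_i^{\pm 1}$ appears in the reduced word of $g$}
\]
(or $p^* = 0$ if it does not appear), and, for each $j \in J$,
\[
p_j^* \;=\; \text{the smallest index at which $g_j^{\pm 1}$ appears}
\]
(or $p_j^* = n+1$ if it does not appear). Then $g \in G_j G_i$ is equivalent to $p^* < p_j^*$. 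Assuming this holds for every $j \in J$, we get $p^* < \min_{j \in J} p_j^*$, so splitting the reduced word at position $p = p^*$ yields a prefix whose letters avoid every $g_j^{\pm 1}$ with $j \in J$ (hence lies in $G_J$) and a suffix whose letters avoid $g_i^{\pm 1}$ (hence lies in $G_i$). This produces $g \in G_J G_i$, closing the argument.

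The delicate point I expect to spend the most care on is the product-characterization of step two, namely tracking cancellations between reduced words in $\langle T\rangle$ and $\langle T' \rangle$ cleanly enough to conclude that a genuine splitting of the reduced word of $g$ exists. Once that normal-form bookkeeping is in place, the combinatorial argument in the last paragraph is elementary.
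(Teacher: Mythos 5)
Your proof is correct, and it takes a genuinely different route from the paper's. Both arguments reduce, via Theorem~\ref{FTlee2}, to the inclusion $\bigcap_{j\in J}(G_jG_i)\subseteq G_JG_i$, and both ultimately rest on uniqueness of reduced words in the free group $G=\langle g_i : i\in I\rangle$. But the paper argues by contradiction: from an element $g\in\bigcap_{j\in J}(G_jG_i)\setminus G_JG_i$ and two indices $j_1,j_2\in J$ it asserts the existence of two distinct reduced words for $g$ (one avoiding $g_{j_1}$, one avoiding $g_{j_2}$) and concludes via a nontrivial relation among independent elements. That step is quite terse and, as literally stated, not justified: $g\in G_{j_1}G_i$ only gives a factorization $g=ab$ with $a$ avoiding $g_{j_1}$ and $b$ avoiding $g_i$, and the reduced word of $g$ may well still contain $g_{j_1}^{\pm1}$ through the factor $b$. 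Your argument supplies exactly the normal-form bookkeeping the paper elides: the split-point characterization of $\langle T\rangle\langle T'\rangle$ (correct, since cancellation between two reduced words is confined to the junction, leaving a prefix of the first and a suffix of the second) turns each condition $g\in G_jG_i$ into the inequality $p^*<p_j^*$ on positions in the \emph{unique} reduced word of $g$, and splitting at $p=p^*$ then directly exhibits $g\in G_JG_i$. What your approach buys is a direct, constructive proof whose only nonelementary input is the standard cancellation lemma; what the paper's approach buys is brevity, at the cost of a step that needs precisely your Step~2 to be made rigorous.
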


\begin{proof}
In order to check that $\Gamma$ is a flag-transitive geometry, we use Theorem~\ref{FTlee2} (2).
Hence we need to check that for each $J\subseteq I$ and each $i\in I\setminus J$, we have $G_JG_i = \cap_{j\in J}(G_jG_i)$. 
Obviously $G_JG_i \subseteq \cap_{j\in J}(G_jG_i)$.
Suppose one of these equalities does not hold. Then there exists a $J\subseteq I$ and a $i\in I \setminus J$ such that $\cap_{j\in J}(G_jG_i)\setminus G_JG_i\neq \emptyset$. Choose an element $g \in \cap_{j\in J}(G_jG_i)\setminus G_JG_i$ and choose two distinct indices $j_1,j_2 \in J$. Then, since $g \in G_{j_1}G_i \setminus G_JG_i$, there must exist a reduced word $w_1$ for $g$ that does not contain $g_{j_1}$ but contains $g_{k_1}$ for some $k_1 
\in J \setminus j_1$. In the same way, there must exist another reduced word $w_2$ for $g$ that does not contain $g_{j_2}$ but contains $g_{k_2}$ for some $k_2 \in J \setminus j_2$. Our assumption on $w_1$ and $w_2$ guarantees that the two words cannot be equal. Since both $w_1$ and $w_2$ represent $g$ in $F_n$, we found a non-trivial relations between independent elements of a free group, a contradiction. Hence all these equalities are trivially satisfied and $\Gamma$ is flag-transitive. 
\end{proof}

The following theorem permits to check when a flag-transitive coset geometry is residually connected.
 \begin{theorem}\label{RClee2}\cite[Corollary 1.8.13 (i) and (ii)]{BC2013}
Suppose that $I$ is finite and that $\Gamma = \Gamma(G, (G_i)_{i\in I})$ is a geometry over $I$ on which $G$ acts flag transitively. The following two statements are equivalent.
\begin{enumerate}
    \item $\Gamma$ is a residually connected geometry.
    \item for each $J\subseteq I$ with $|I\setminus J| \geq 2$, $G_J = \langle G_{J\cup \{i\}} : i \in I\setminus J\rangle$.
\end{enumerate}
\end{theorem}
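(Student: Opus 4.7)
The plan is to exploit the standard correspondence, valid whenever $G$ acts flag-transitively on $\Gamma=\Gamma(G,(G_i)_{i\in I})$, between a residue in $\Gamma$ and a smaller coset incidence system. Namely, for a flag $F$ of type $J\subsetneq I$, flag-transitivity produces $g\in G$ with $gF=F_J:=\{G_j:j\in J\}$, so right multiplication by $g$ is a type-preserving incidence isomorphism from the residue of $F$ onto the residue of $F_J$. The latter is in turn naturally isomorphic to the coset incidence system $\Gamma(G_J,(G_{J\cup\{i\}})_{i\in I\setminus J})$: an element $G_ih$ of type $i\in I\setminus J$ lies in the residue of $F_J$ iff $G_ih\cap G_j\neq\emptyset$ for every $j\in J$, i.e.\ $h\in\bigcap_{j\in J}G_jG_i$, which by Theorem~\ref{FTlee2}(2) equals $G_JG_i$. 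Hence every element of the residue admits a representative $h\in G_J$, two such representatives $h_1,h_2\in G_J$ give the same element iff $h_1h_2^{-1}\in G_i\cap G_J=G_{J\cup\{i\}}$, and a similar application of Theorem~\ref{FTlee2}(2) to $J\cup\{i'\}$ shows that incidence in the residue matches incidence in the coset geometry on $G_J$. Thus residual connectedness of $\Gamma$ is equivalent to connectedness of the incidence graph of $\Gamma(G_J,(G_{J\cup\{i\}})_{i\in I\setminus J})$ for every $J\subseteq I$ with $|I\setminus J|\geq 2$.

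Having reduced the problem, the next step is the key lemma: for any group $H$ and any family $(H_i)_{i\in K}$ of subgroups with $|K|\geq 2$, the incidence graph of $\Gamma(H,(H_i)_{i\in K})$ is connected if and only if $H=\langle H_i:i\in K\rangle$. For the ``if'' direction, any $h\in H$ factors as $h=h_1h_2\cdots h_\ell$ with $h_k\in H_{j_k}$ for a sequence $j_1,\ldots,j_\ell\in K$ with $j_k\neq j_{k+1}$, and one exhibits the walk
\[
H_{j_1}\cdot e,\ H_{j_2}\cdot h_1,\ H_{j_3}\cdot h_1h_2,\ \ldots,\ H_{j_\ell}\cdot h_1\cdots h_{\ell-1}
\]
in which each consecutive pair of cosets shares the common element $h_1\cdots h_k$; the end of the walk is then attached, via any $i\in K$ with $i\neq j_\ell$, to the coset $H_i\cdot h$, using that the two cosets share $h$. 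Since $|K|\geq 2$, analogous short walks also link any two cosets of the same $H_i$, giving overall connectedness. For the ``only if'' direction, any incident pair $H_{j_k}h_k * H_{j_{k+1}}h_{k+1}$ yields $h_{k+1}h_k^{-1}\in H_{j_k}H_{j_{k+1}}$, so iterating along a path from $H_{j_0}\cdot e$ to $H_{j_r}\cdot h$ gives $h\in\langle H_i:i\in K\rangle$, and connectedness then forces $H=\langle H_i:i\in K\rangle$.

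Combining the two steps yields the equivalence of (1) and (2). The main obstacle is the bookkeeping in the residue identification: one needs flag-transitivity, through Theorem~\ref{FTlee2}(2), not only to standardise the flag but also to recognise the residue elements of type $i$ with the right cosets of $G_{J\cup\{i\}}$ in $G_J$, and to check that incidence there coincides with coset-intersection inside $G_J$. Once that identification is pinned down, the generation-equals-connectedness lemma closes the argument in both directions.
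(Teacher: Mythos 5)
This statement is quoted from \cite[Corollary 1.8.13]{BC2013} and the paper gives no proof of its own, so there is nothing internal to compare against; measured against the standard argument in that reference, your proposal is correct and follows essentially the same route (identify the residue of the standard flag of type $J$ with the coset system $\Gamma(G_J,(G_{J\cup\{i\}})_{i\in I\setminus J})$ using Theorem~\ref{FTlee2}(2), then prove that connectedness of a coset incidence system is equivalent to the subgroups generating the whole group). The only blemish is a persistent left/right mix-up: with the paper's right cosets $G_ig$, membership of $G_ih$ in the residue reads $h\in\bigcap_{j\in J}G_iG_j=G_iG_J$ rather than $G_JG_i$, and in your connecting walk the cosets $H_{j_k}\cdot h_1\cdots h_{k-1}$ and $H_{j_{k+1}}\cdot h_1\cdots h_k$ do \emph{not} share $h_1\cdots h_k$ (that is the left-coset computation); writing $h=h_\ell\cdots h_1$ and walking through $H_{j_{k+1}}\cdot h_k\cdots h_1$, or arguing by induction on word length via $H_{j_1}h_1h_2\cdots h_\ell=H_{j_1}h_2\cdots h_\ell$, repairs this with no change to the idea.
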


Let us now construct a geometry $\Gamma$ on which $K$ acts injectively as a group of correlations. Unfortunately, only a subgroup of $F_n$ will act as a type-preserving automorphism group of $\Gamma$.
For the sake of clarity, we detail the case where $n=2$ before discussing the general case.

Let $n = 2$ and let $G$ be the subgroup of $F_2 = \langle x,y \rangle$ generated by the following four words.
$$yxy^{-1}, y^{-1}xy,xyx^{-1} \text{ and } x^{-1}yx.$$ 
Define four subgroups $G_1$, $G_2$, $G_3$ and $G_4$ taking for each of them three of the above four generators. In other words, let $G_ 1 = \langle y^{-1}xy,xyx^{-1},x^{-1}yx \rangle$, $G_ 2 = \langle yxy^{-1},xyx^{-1},x^{-1}yx \rangle$, $G_ 3 = \langle yxy^{-1},y^{-1}xy,x^{-1}yx \rangle$ and $G_ 4 = \langle yxy^{-1},y^{-1}xy,xyx^{-1} \rangle$.

\begin{theorem}\label{f2}
    The coset geometry $\Gamma = (G,(G_1,G_2,G_3,G_4))$ is residually connected and flag-transitive. Moreover $\Aut(\Gamma)/\Aut_I(\Gamma) \cong D_4$. Hence, $\Gamma$ is a weak incidence geometric representation for $G$. 
\end{theorem}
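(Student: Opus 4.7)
The plan is to tackle the three claims (flag-transitivity, residual connectedness, and $\Aut(\Gamma)/\Aut_I(\Gamma)\cong D_4$) in turn, each relying on the concrete free-group structure of $G$.

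First I would verify that the four elements $a_1 = yxy^{-1}$, $a_2 = y^{-1}xy$, $a_3 = xyx^{-1}$, $a_4 = x^{-1}yx$ form an independent set in $F_2$, so that $G$ is free of rank $4$ on this basis. This is a direct Nielsen-reduction check: for any $i,j\in\{1,2,3,4\}$ and any signs with $(i,\varepsilon)\neq(j,-\delta)$, the product $a_i^{\varepsilon}a_j^{\delta}$ is already a freely reduced word in $F_2$ of length at least $\max(|a_i|,|a_j|)$, since the endpoint letters of the $a_i$ prevent sustained cancellation. Equivalently, the Stallings folded graph of the subgroup has five vertices and first Betti number four. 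Independence makes Theorem~\ref{freeFT} immediately applicable, giving flag-transitivity of $\Gamma$.

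For residual connectedness I would invoke Theorem~\ref{RClee2}. The standard fact for parabolic subgroups of a free group gives $G_J=\bigcap_{j\in J}G_j=\langle a_k:k\notin J\rangle$ for every $J\subseteq I$; consequently, for each generator $a_k$ of $G_J$, one can pick any $i\in(I\setminus J)\setminus\{k\}$ (possible whenever $|I\setminus J|\geq 2$) to witness $a_k\in G_{J\cup\{i\}}$, so the generation condition holds.

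Next I would exhibit the $D_4$ action on the type set via three automorphisms of $F_2$ that preserve $G$ setwise: the swap $\sigma:(x,y)\mapsto(y,x)$, and the inversions $\tau:(x,y)\mapsto(x^{-1},y)$ and $\upsilon:(x,y)\mapsto(x,y^{-1})$. A short calculation shows these permute $\{a_1,a_2,a_3,a_4\}$ up to inversion, inducing on $\{1,2,3,4\}$ the permutations $(1\,3)(2\,4)$, $(3\,4)$ and $(1\,2)$ respectively; each therefore restricts to an automorphism of $G$ that permutes the $G_i$ in the corresponding way, and so induces a correlation of $\Gamma$. The subgroup of $S_4$ generated by these three permutations is dihedral of order $8$, giving the inclusion $\Aut(\Gamma)/\Aut_I(\Gamma)\supseteq D_4$.

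The main obstacle is the matching upper bound. Since any automorphism of the abstract free group $F_4\cong G$ that permutes the family $\{G_i\}$ of parabolic subgroups would, a priori, yield a correlation of $\Gamma$, and since the basis-permutation $a_i\mapsto a_{\pi(i)}$ is such an automorphism for every $\pi\in S_4$, one must explain why the combinatorial structure of $\Gamma$ forces any correlation to respect the partition $\{\{1,2\},\{3,4\}\}$ of the type set inherited from the natural distinction between $x$-conjugates and $y$-conjugates among the $a_i$. Identifying this invariant purely at the level of $\Gamma$ and then checking that the stabilizer of the said partition in $S_4$ is exactly the $D_4$ already produced is the crux of the argument; combined with the injection of $G$ into $\Aut_I(\Gamma)$ through the right-regular action, this establishes both the isomorphism $\Aut(\Gamma)/\Aut_I(\Gamma)\cong D_4$ and the claim that $\Gamma$ is a weak incidence geometric representation for $G$.
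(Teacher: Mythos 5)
Your handling of flag-transitivity and residual connectedness is correct and is essentially the paper's argument: the independence of $a_1,\dots,a_4$ (which the paper reads off the covering graph of Figure~\ref{fig:graphs} and you obtain by Nielsen reduction / Stallings folding) feeds Theorem~\ref{freeFT}, and the identity $G_J=\langle a_k : k\notin J\rangle$ feeds Theorem~\ref{RClee2}. Your three automorphisms also reproduce the paper's group $K$ exactly (your $\tau$ is $\phi_1$, your $\sigma$ is $\phi_2$, and your $\upsilon$ is $\phi_2\phi_1\phi_2$), so the lower bound $D_4\leq \Aut(\Gamma)/\Aut_I(\Gamma)$ is in order.

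The gap is the upper bound, and you have put your finger on exactly the right spot --- but the invariant you hope to extract from $\Gamma$ does not exist, so this step cannot be completed. Since $\{a_1,a_2,a_3,a_4\}$ is a free basis of $G$ and $G_i=\langle a_j : j\neq i\rangle$, every permutation $\pi$ of $\{1,2,3,4\}$ extends to an automorphism $\alpha_\pi$ of $G$ with $\alpha_\pi(G_i)=G_{\pi(i)}$, and the map $G_i g\mapsto G_{\pi(i)}\alpha_\pi(g)$ is then a correlation of the coset incidence system inducing $\pi$ on the type set. Hence the image of $\Aut(\Gamma)$ in $\Sym(I)$ is all of $S_4$, i.e.\ $\Aut(\Gamma)/\Aut_I(\Gamma)\cong S_4$ rather than $D_4$. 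The partition of the types into ``$x$-conjugates'' and ``$y$-conjugates'' is a feature of the embedding $G\leq F_2$, not of the abstract pair $(G,(G_i)_{i})$, and the coset geometry only remembers the latter. Be aware that the paper's own proof has the same lacuna: it exhibits the $D_4$ of correlations coming from $\Aut(F_2)$ and then asserts the isomorphism without ruling out further correlations. What survives of the statement is the injection of $D_4$ (indeed of $G\rtimes D_4$) into $\Aut(\Gamma)$, which is all that the ``weak incidence geometric representation'' conclusion requires; the displayed isomorphism $\Aut(\Gamma)/\Aut_I(\Gamma)\cong D_4$ does not.
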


\begin{proof}
By Theorem~\ref{freeFT}, $\Gamma$ is flag-transitive.

Let $K =\langle \phi_1, \phi_2 \rangle$ where
$\phi_1 : F_2 \rightarrow F_2$ is an automorphism of $F_2$ given by $\phi_1(x) = x^{-1}$ and $\phi_1(y) = y$ and
$\phi_2 : F_2 \rightarrow F_2$ is an automorphism of $F_2$ given by $\phi_2(x) = y$ and $\phi_2(y) = x$.
We have that $K\cong D_4$ and $K\leq \Aut(F_2)$.
    It is straightforward to check that $K$ acts on $\{G_1,G_2,G_3,G_4\}$ in the same way that $D_4$ acts on the vertices of a square. Hence $\Aut(\Gamma)/\Aut_I(\Gamma) \cong D_4$.
    
    The group $G$ can be seen as the fundamental group of the graph $\mathcal{G}$ in Figure \ref{fig:graphs}, where the identification is suggested by the labeling of the edges of the graph. It is then clear that the subgroup $G_1$ corresponds to the subgroup of $G$ containing all the loops of $\mathcal{G}$ that never go into the bottom branch. Similarly, each $G_i$ correspond to the subgroups of all loops never going to one of the four branches. Therefore, it is apparent that $G_i \cap G_j = G_{i,j}$ for every distinct $i,j = 1,2,3,4$. The same holds for triple intersections and also $G_1 \cap G_2 \cap G_3 \cap G_4 = \{e\}$. It is now straightforward to check that $\Gamma$ is residually connected thanks to Theorem~\ref{RClee2}.
\end{proof} 

The key in the construction of the previous example is to take each generator $x_1, \ldots, x_n$ of $F_n$ twice, conjugating it by another generator or the inverse of that other generator.
We thus get $n\cdot (n-1)\cdot 2$ words, of the form $x_jx_ix_j^{-1}$ or $x_j^{-1}x_ix_j$.
Thus for an arbitrary $n$, we construct a graph $\mathcal{G}_n$ generalizing the one in Figure \ref{fig:graphs}. Recall that $F_n = \langle x_1 , \cdots, x_n\rangle $. The graph will live inside of $\mathbb{R}^n$. It is made of the $n$-coordinate axis, understood as graphs with vertices at integer coordinates. Moreover, to each vertex of $\mathcal{G}_n$ we attach a rose with $n-1$ petals. Each edge going out of the base vertex $(0,0,\cdots,0)$ is labeled by $x_i$, where the $i$ is chosen according to which coordinate axis the edge belongs to. We continue labeling edges by $x_i$ in each axis. The remaining non-labeled edges are all loops at some vertex $p$. We label them individually with some $x_j$ different from the $x_i$ corresponding to the axis they are attached to.
The group $G$ is then the fundamental group of $\mathcal{G}_n$. It is generated by the $2n(n-1)$ elements described above, namely
$$G:= \langle x_jx_ix_j^{-1},x_j^{-1}x_ix_j | i,j \in \{1, \ldots, n\}, i\neq j\rangle.$$
The maximal parabolic subgroups $G_i$ are then the $2n(n-1)$ subgroups of $G$ generated by all but one of these generators.

Now the group $K$ is the group generated by the following three automorphisms of $F_n$ that we give by precising the images of the generators $x_1, \ldots, x_n$.

$$\phi_1 : F_n\rightarrow F_n : (x_1, \ldots, x_n) \mapsto (x_1^{-1}, x_2, \ldots, x_n)$$
$$\phi_\rho : F_n\rightarrow F_n : (x_1, \ldots, x_n) \mapsto (x_2, \ldots, x_n, x_1)$$
$$\phi_\tau : F_n\rightarrow F_n : (x_1, \ldots, x_n) \mapsto (x_2, x_1, x_3 \ldots, x_n)$$

The group $K := \langle \phi_1, \phi_\rho, \phi_\tau \rangle$ is isomorphic to $C_2^n \rtimes \Sym(n)$ where the group $C_2^n$ acts on the generators $x_i$ individually, swapping them with their inverses and the group $\Sym(n)$ is generated by $\phi_\rho$ and $\phi_\tau$ that swap the generators.
\begin{theorem}
    The coset geometry $\Gamma = (G,(G_i)_{i \in I})$ is residually connected and flag-transitive. Moreover, $K$ acts as correlations of $\Gamma$. Hence, $\Gamma$ is a weak incidence geometric representation for $G$. 
\end{theorem}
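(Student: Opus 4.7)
My plan is to follow the same four-step outline as in the proof of Theorem~\ref{f2}: establish flag-transitivity, establish residual connectedness, exhibit $K$ as a group of correlations, and then conclude the weak incidence geometric representation claim. The graph $\mathcal{G}_n$ described just before the theorem is the key combinatorial object that converts all three verifications into simple statements about loops.

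For flag-transitivity, I would invoke Theorem~\ref{freeFT}. Its only non-trivial hypothesis is that the $2n(n-1)$ words $x_jx_ix_j^{-1}$ and $x_j^{-1}x_ix_j$ (for $i\neq j$) form an independent set in $F_n$. This is precisely encoded in the graph model: $\mathcal{G}_n$ is a covering of the $n$-petaled rose $R_n$ with $\pi_1(R_n)=F_n$, and under the induced embedding $\pi_1(\mathcal{G}_n)\hookrightarrow F_n$ the loops ``axis edge $x_j^{\pm 1}$ – petal $x_i$ – axis edge back'' form a free basis of $\pi_1(\mathcal{G}_n)$ whose image is exactly our chosen generating set of $G$.

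For residual connectedness, I would check condition (2) of Theorem~\ref{RClee2}. Using the same graph picture, for any $\iota\in I$ the maximal parabolic $G_\iota$ is the free group on all loops of $\mathcal{G}_n$ except the one indexed by $\iota$, and by freeness the intersections satisfy $G_J=\bigcap_{j\in J}G_j=\langle g_\iota : \iota\in I\setminus J\rangle$ for every $J\subseteq I$. Now fix $J$ with $|I\setminus J|\geq 2$. Each free generator $g_\iota$ of $G_J$ lies in $G_{J\cup\{i\}}$ for any choice of $i\in I\setminus J$ with $i\neq \iota$, and such an $i$ exists because $|I\setminus J|\geq 2$. This yields $G_J\subseteq \langle G_{J\cup\{i\}}: i\in I\setminus J\rangle$; the reverse inclusion is trivial, so the criterion holds.

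For the correlation action of $K$, I would inspect the three generators $\phi_1,\phi_\rho,\phi_\tau$ on an arbitrary generator $x_j^{\epsilon}x_ix_j^{-\epsilon}$ of $G$. Each such automorphism of $F_n$ sends this word to another word of the same shape or to its inverse, so $K$ preserves $G$ and induces a permutation $\sigma(\phi)$ of the index set $I$; this in turn produces a coset-level map $gG_\iota\mapsto \phi(g)G_{\sigma(\phi)(\iota)}$ which respects incidence because $\phi$ preserves intersections in $F_n$. Combining the faithful right-multiplication action of $G$ given by Proposition~\ref{tits} with this action of $K$ yields the desired injection into $\Aut(\Gamma)$ with $G$ landing in $\Aut_I(\Gamma)$.

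The main obstacle I foresee is the last bookkeeping step: showing that the map $K\to \Aut(\Gamma)/\Aut_I(\Gamma)$ is genuinely injective, so that $K$ really records outer correlations rather than collapsing. I would handle this by reading off, from the labelling of $I$ by triples $(i,j,\pm)$ with $i\neq j$, that the permutation $\sigma(\phi)$ determines both the underlying element of $\Sym(n)$ and the sign vector in $C_2^n$: a trivial $\sigma$ forces $\phi=e$ in $K=C_2^n\rtimes\Sym(n)$. Once this is verified, all four parts of the theorem follow.
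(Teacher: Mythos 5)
Your proposal is correct and follows essentially the same route as the paper, which simply defers all three verifications (flag-transitivity via Theorem~\ref{freeFT}, residual connectedness via the graph/covering-space picture and Theorem~\ref{RClee2}, and the permutation action of $K$ on the parabolics) to the argument already given for Theorem~\ref{f2}. Your added check that $\sigma(\phi)$ determines $\phi$, so that $K$ injects into $\Aut(\Gamma)/\Aut_I(\Gamma)$, is a detail the paper leaves implicit but is verified correctly.
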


\begin{proof} The fact that $K$ is as desired and flag-transitivity are proved in the same way as in Theorem~\ref{f2}. For residual connectedness, the proof is identical to the one given in the proof of Theorem~\ref{f2}.
\end{proof}

\begin{figure}
\centering
\begin{tikzpicture}

\foreach \i in {-5,...,5}
{
\filldraw[black] (\i,0) circle (1pt)  node[anchor=west]{};
\filldraw[black] (0,\i) circle (1pt)  node[anchor=west]{};
}

\foreach \i in {-5, ..., -1}{
\draw[->](\i+1,0) -- (\i+0.5,0) node[anchor=south]{$x$};
\node at (\i,0.8){$y$};
\draw[->](0,\i+1) -- (0,\i +0.5) node[anchor=east]{$y$};
\node at (0.8,\i){$x$};
\draw (\i,0.3) circle [radius=0.3];
\draw (0.3,\i) circle [radius=0.3];
}

\foreach \i in {1, ..., 5}{
\draw[->](\i-1,0) -- (\i-0.5,0) node[anchor=south]{$x$};
\node at (\i,0.8){$y$};
\draw[->](0,\i-1) -- (0,\i -0.5) node[anchor=east]{$y$};
\node at (0.8,\i){$x$};
\draw (\i,0.3) circle [radius=0.3];
\draw (0.3,\i) circle [radius=0.3];
}

\draw (-5.5,0) -- (5.5,0) ;
\draw (0,5.5) -- (0,-5.5) ;

\end{tikzpicture}
   \caption{The cover of the rose with two petals corresponding to the subgroup $G$ of $F_2$}
\label{fig:graphs}
\end{figure}
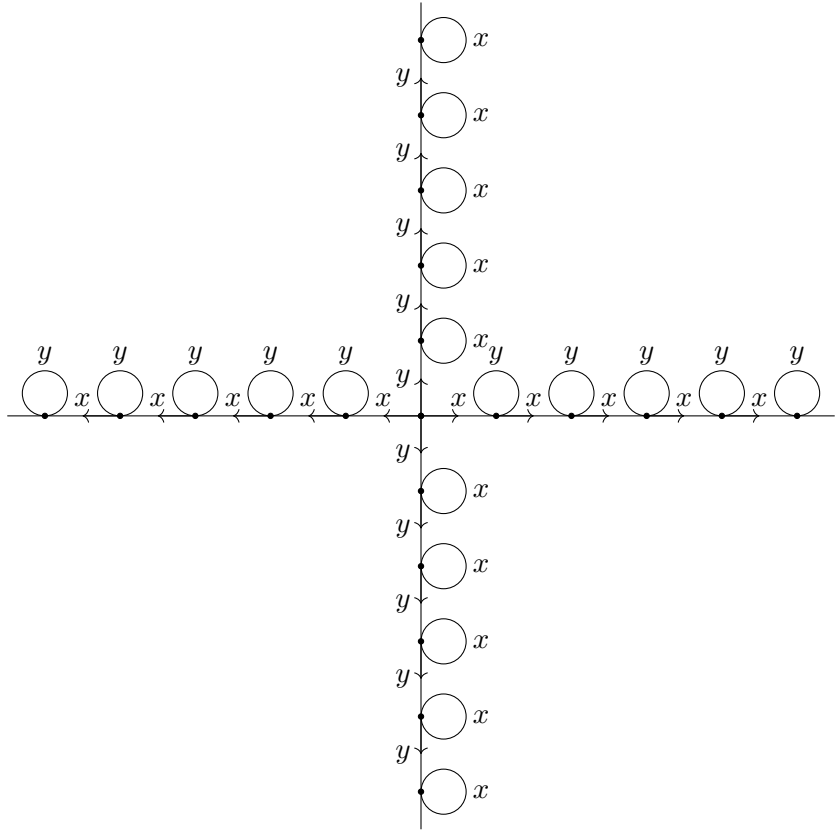

\section{Future work}
In Section~\ref{freeGroups}, we restricted ourselves to finite subgroups of the outer automorphism group as, otherwise, we would need to construct coset geometries with infinitely many types. The theory of such coset geometries has not yet, to our knowledge, been developed. It would be interesting and needed for us to push in that direction.

In future work we intend to study in more details incidence geometric representations for all finite almost simple groups and in particular for classical groups mentioned in Section~\ref{classicalGroups}.

In another direction, it would be natural to study whether, for a fixed group $G$, one can find a geometric representation $\Gamma$ with additional properties, such as flag-transitivity, residual connectedness, thinness, and so on. In doing so, some care is necessary. For example, it is not hard to show that if $\Gamma$ is flag-transitive and thin, then $\Aut(\Gamma)$ is a semi-direct product of $\Aut_I(\Gamma)$ with the subgroup of $\Sym(I)$ representing the action of $\Aut(\Gamma)$ on the type set $I$. Therefore, only groups $G$ such that $\Aut(G) \cong \Inn(G) \rtimes \Out(G)$ have a chance to admit flag-transitive and thin geometric representations. Other relations between algebraic properties of $\Aut(G)$ and $\Inn(G)$ and geometric properties of their geometric representations likely exist, and would deserve to be studied further.
\bibliographystyle{abbrv} 
\bibliography{refs}
\end{document}